\documentclass[11pt]{article}

\usepackage[margin=1in]{geometry}
\usepackage{amsmath,amssymb,amsthm,mathtools}
\usepackage{booktabs}
\usepackage{microtype}
\usepackage{enumitem}
\usepackage[colorlinks=true,linkcolor=blue,citecolor=blue,urlcolor=blue]{hyperref}

\newtheorem{theorem}{Theorem}[section]
\newtheorem{proposition}[theorem]{Proposition}
\newtheorem{lemma}[theorem]{Lemma}
\newtheorem{corollary}[theorem]{Corollary}
\theoremstyle{definition}
\newtheorem{definition}[theorem]{Definition}
\newtheorem{example}[theorem]{Example}
\theoremstyle{remark}
\newtheorem{remark}[theorem]{Remark}

\newcommand{\C}{\mathcal{C}}
\newcommand{\E}{\mathbb{E}}
\newcommand{\Pp}{\mathbb{P}}
\newcommand{\Thetaop}{\Theta}

\newcommand{\sig}[2]{\sigma_{#1}(#2)}
\newcommand{\StirlingII}[2]{\genfrac\{\}{0pt}{}{#1}{#2}}

\title{Initial Runs in Integer Compositions:\\Lambert Series and Generalized Divisor Sums}
\author{Igor Kleiner\\\small Department of Data Science, Holon Institute of Technology, Holon, Israel}
\date{}

\begin{document}
\maketitle

\begin{abstract}
We study the initial run of equal adjacent parts in an integer composition. For a composition $\alpha$, let $P(\alpha)$ be the common value of the parts in the initial run, let $K(\alpha)$ be the number of parts in that run, and set $S(\alpha)=P(\alpha)K(\alpha)$. We obtain exact finite formulas for the joint parameters $(P,K,S)$ and the corresponding limiting laws. In the unrestricted model, each fixed event $(P,K)=(p,k)$ reaches its limiting probability once $n$ is above an explicit threshold. The limiting distribution of the total size is
\[
\Pp(S=s)=2^{-s}\sum_{d\mid s}(1-2^{-d}).
\]
Pointing a unit cell in the initial run gives
\[
M_1(z)=\frac{1-z}{1-2z}\sum_{m\ge1}\sig{1}{m}z^m.
\]
For every $r\ge1$, the $r$th power-moment generating function has coefficient function
\[
A_r(n)=\sum_{j=1}^{r}(-1)^{j+1}\binom{r}{j}n^{r-j}\sig{j}{n},
\]
so higher moments involve explicit finite combinations of generalized divisor sums. We also give factorial-moment formulas for the initial-run length and extend the construction to compositions with parts in a prescribed set $\mathcal A$, where the corresponding formulas involve restricted divisor sums and the dominant root of the part generating function.
\end{abstract}

\noindent\textbf{Keywords.} Integer compositions; runs; generating functions; pointing; Lambert series; divisor sums; Erd\H{o}s--Borwein constant; moment sequences.

\medskip
\noindent\textbf{2020 Mathematics Subject Classification.} 05A15, 05A16; secondary 11A25.

\section{Introduction}

An integer composition of $n$ is an ordered sequence
\[
\alpha=(a_1,\ldots,a_\ell),\qquad a_i\ge1,\qquad a_1+\cdots+a_\ell=n.
\]
We write $\C_n$ for the set of compositions of $n$. It is classical that
\[
|\C_n|=2^{n-1}\qquad(n\ge1).
\]
Compositions are among the basic sequential structures of enumerative and analytic combinatorics; see, for example, Flajolet and Sedgewick~\cite{FlajoletSedgewick2009} and Heubach and Mansour~\cite{HeubachMansour2009}.

A \emph{run} is a maximal consecutive block of equal parts. Thus the composition
\[
(3,3,3,1,4,4,2)
\]
has runs $(3,3,3)$, $(1)$, $(4,4)$, and $(2)$. Runs in integer compositions have been studied from several directions. Wilf~\cite{Wilf2011} considered restrictions on run lengths, while Gafni~\cite{Gafni2015} analyzed the longest run of equal parts in a uniformly random composition. Bender and Gao~\cite{BenderGao2016} developed a broader framework for runs of a prescribed subcomposition in locally restricted sequential structures, with applications to maximum run lengths. Closely related run statistics for words generated by independent geometrically distributed random variables were developed by Grabner, Knopfmacher, and Prodinger~\cite{GrabnerKnopfmacherProdinger2003}. The probabilistic representation of random compositions by geometric variables also underlies other composition statistics; see Hitczenko and Louchard~\cite{HitczenkoLouchard2001}. Position-sensitive statistics on random compositions include, for example, the record statistics studied by Knopfmacher and Mansour~\cite{KnopfmacherMansour2012}. A closer boundary example occurs in Shattuck~\cite{Shattuck2025}: in a capacity-statistic argument for compositions with parts in $\{1,2\}$, marked $1$'s in an initial run of $1$'s appear as an auxiliary class. That construction fixes the part value and does not study the total size of a general initial run or its moment sequence. There is also relevant OEIS work of Wiseman: A357180 records the first run-length of each composition in standard order, while A353847 and A353932 encode the run-sum transformation, replacing every equal-part run by its sum~\cite{OEISA357180,OEISA353847,OEISA353932}. Thus the raw boundary quantities $K$ and the run-sum vector were already present in the literature, although not aggregated over all compositions of a fixed size in the form considered here. Restricted-part composition models themselves form a broad classical family; see, for example, Banderier and Hitczenko~\cite{BanderierHitczenko2012} for analytic and probabilistic asymptotics and Huang~\cite{Huang2020} for enumerative identities with prescribed part restrictions.

In this paper we isolate the \emph{initial run}, the unique run containing the first part of a composition. Its position at the boundary makes it structurally different from a generic later run: it can be separated from the remaining tail without imposing a constraint on a preceding run. For the second and subsequent runs, adjacent run values must be different, so the corresponding generating functions already involve sums over several run values with inequality constraints rather than the single Lambert-series factor that appears below. Thus the initial run is the simplest boundary case of a broader run hierarchy, and it is natural to study it first.

Suppose that the first part has value $p$ and occurs $k$ consecutive times before the first change. We study the three naturally related statistics
\[
P=p,\qquad K=k,\qquad S=pk.
\]
Here $K$ is the length of the initial run in number of parts, whereas $S$ is its total size. Equivalently, after representing a part of size $p$ as $p$ unit cells, weighting a composition by $S$ counts the ways to distinguish one unit cell in its initial run.

The finite enumeration is explicit. For fixed $(p,k)$ we obtain a closed formula for the number of compositions of $n$ with $(P,K)=(p,k)$. Because $|\C_n|=2^{n-1}$, the probability of each fixed local event $(P_n,K_n)=(p,k)$, and likewise of each fixed value of $S_n$, is already equal to its limit once $n$ is large enough relative to the prescribed parameters. This finite stabilization is particular to the unrestricted model.

The arithmetic part begins with pointing. Pointing one unit cell in the initial run gives the classical Lambert series for $\sigma_1$. For general $r$, the $r$th power moment of $S$ has coefficients that are explicit binomial combinations of $\sigma_1,\ldots,\sigma_r$. The Lambert-series identities used here are classical; see Erd\H{o}s~\cite{Erdos1948} and Schmidt~\cite{Schmidt2026}. Divisor sums have also appeared previously in identities involving compositions; a particularly direct antecedent is Alegri~\cite{Alegri2022}, with further identities in Alegri~\cite{Alegri2024} and the subsequent correction~\cite{AlegriCorrection2026}. The constant $\sum_{m\ge1}\sigma_1(m)/2^m$ and a related partial-sum sequence are OEIS A066766 and A066767~\cite{OEISA066766,OEISA066767}; Section~\ref{sec:pointing} records the exact relation with our first-moment coefficients.

We begin with a joint generating function for $P$, $K$, and $S$. Exact enumeration for fixed $(P,K)$ then gives the finite laws and their limiting forms, including the distribution of $S$. Pointing yields the first Lambert-series identity, and the same decomposition gives the full sequence of power moments in terms of generalized divisor sums. Section~8 records factorial-moment forms, while Section~\ref{sec:restricted} extends the construction to prescribed part sets. Numerical checks are collected in Section~10.

The existing literature already records several ingredients of the present statistic: first-run length appears in OEIS A357180, and the run-sum transformation of a composition is recorded by A353847/A353932~\cite{OEISA357180,OEISA353847,OEISA353932}. Thus we do not claim novelty for the mere act of taking the first run or summing the entries of a run. To the best of our knowledge, however, the first run-sum has not previously been developed as an aggregate statistic over all compositions of a fixed size, with exact finite enumeration, its divisor-indexed distribution, a pointed Lambert-series interpretation, and the all-order generalized-divisor-sum moment formulas obtained below. The first-moment coefficient sequence is itself the first-difference transform of OEIS A066767, and the Lambert series and limiting constant are classical.

\section{Initial-run decomposition}

We first make the local statistics and the pointing convention precise.

\begin{definition}[Initial run and its statistics]\label{def:initialrun}
Let $\alpha=(a_1,\ldots,a_\ell)$ be a nonempty composition. Its \emph{initial run} is the unique maximal prefix
\[
(a_1,\ldots,a_k)
\]
whose entries are all equal. We write
\[
P(\alpha)=a_1,\qquad K(\alpha)=k,\qquad S(\alpha)=P(\alpha)K(\alpha).
\]
Thus $K(\alpha)$ is the number of parts in the initial run, while $S(\alpha)$ is the sum of those parts. These statistics are defined only for nonempty compositions.
\end{definition}

\begin{definition}[Unit cells and pointing]\label{def:pointing}
For the purpose of pointing, a part of size $p$ is represented by an ordered row of $p$ unit cells. Hence an initial run consisting of $k$ copies of $p$ contains exactly $pk=S(\alpha)$ unit cells. A \emph{pointed initial-run composition} is a pair $(\alpha,c)$, where $\alpha$ is a nonempty composition and $c$ is one of the unit cells belonging to the initial run of $\alpha$. Consequently, a composition $\alpha$ gives rise to exactly $S(\alpha)$ pointed objects. The unit-cell representation is auxiliary: it does not change the underlying notion of integer composition.
\end{definition}

\begin{remark}[Boundary cases]\label{rem:boundary}
If $\alpha=(n)$ has a single part, then $P(\alpha)=n$, $K(\alpha)=1$, and $S(\alpha)=n$. More generally, if all parts of $\alpha$ are equal, then the admissible tail in the decomposition below is empty. In particular, all definitions and formulas include the case $n=1$.
\end{remark}

It is convenient to include the empty composition temporarily. Let
\[
T(z)=1+\sum_{n\ge1}2^{n-1}z^n=1+\frac{z}{1-2z}=\frac{1-z}{1-2z}.
\]
Thus $T(z)$ is the ordinary generating function for all compositions, including the empty composition.

Fix $p\ge1$. A nonempty run consisting entirely of parts equal to $p$ has generating function
\[
w_p(z)=z^p+z^{2p}+z^{3p}+\cdots=\frac{z^p}{1-z^p}.
\]
After the initial run, the remaining tail is either empty or begins with a part different from $p$.

\begin{lemma}[Admissible tail]
For fixed $p\ge1$, the generating function for tails that are empty or begin with a part different from $p$ is
\[
R_p(z)=(1-z^p)T(z)=(1-z^p)\frac{1-z}{1-2z}.
\]
\end{lemma}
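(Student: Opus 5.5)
The natural route is a complement argument inside the class of all compositions, whose generating function is $T(z)=(1-z)/(1-2z)$ with the empty composition included. Fix $p\ge1$. Every composition, empty or not, falls into exactly one of two disjoint classes. The first class is the admissible tails: those that are empty or whose first part is different from $p$. The second class is the compositions whose first part equals $p$. So if we write $B_p(z)$ for the generating function of the second class, we get $T(z)=R_p(z)+B_p(z)$, and it suffices to show that $B_p(z)=z^pT(z)$.

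That identity comes from an explicit size-shifting bijection. A composition whose first part is $p$ is $(p,\beta)$ with $\beta$ an arbitrary composition, possibly empty, of $n-p$. Deleting the first part maps the second class bijectively onto the set of all compositions, and it lowers the size by exactly $p$. Hence $B_p(z)=z^pT(z)$. Subtracting from $T(z)$ gives
\[
R_p(z)=T(z)-z^pT(z)=(1-z^p)\frac{1-z}{1-2z},
\]
as claimed.

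Two small points need care. The empty composition must be counted once, in $R_p$ and not in $B_p$, and this matches the constant terms: $R_p(0)=1$ and $B_p(0)=0$. The equality is one of formal power series, so no convergence issue arises. As an optional cross-check, one can compare coefficients directly. For $n\ge1$ the number of admissible tails of size $n$ should be $2^{n-1}-[n=p]-2^{n-p-1}[n>p]$, that is, all compositions minus those starting with $p$, and this is exactly the coefficient of $z^n$ in $(1-z^p)T(z)$. The obstacle is not technical; the only point needing attention is the empty-composition bookkeeping.
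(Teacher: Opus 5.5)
Your proof is correct and follows the same approach as the paper: count all tails by $T(z)$, identify the forbidden tails (those beginning with $p$) as $z^pT(z)$ by stripping off the first part, and subtract. Your extra checks on the empty composition and on the coefficients $2^{n-1}-[n=p]-2^{n-p-1}[n>p]$ are consistent with this.
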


\begin{proof}
All possible tails, including the empty tail, are counted by $T(z)$. A nonempty tail beginning with a part equal to $p$ consists of that first part, contributing $z^p$, followed by an arbitrary composition, contributing $T(z)$. Hence the forbidden tails have generating function $z^pT(z)$, and subtraction gives
\[
R_p(z)=T(z)-z^pT(z)=(1-z^p)T(z).
\]
\end{proof}

\begin{proposition}[Canonical initial-run decomposition]\label{prop:decomp}
Every nonempty composition admits a unique decomposition
\[
\alpha=(\underbrace{p,\ldots,p}_{k\text{ copies}})\cdot\beta,
\]
where $p,k\ge1$ and $\beta$ is empty or begins with a part different from $p$.
\end{proposition}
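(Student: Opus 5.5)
We argue existence and uniqueness directly from Definition~\ref{def:initialrun}, taking $p=P(\alpha)$ and $k=K(\alpha)$ as the canonical choice. For existence, let $\alpha=(a_1,\ldots,a_\ell)$ be nonempty and set $p=a_1\ge1$. Define $k$ as the largest index such that $a_1=\cdots=a_k$. Such a $k$ exists because $k=1$ always qualifies and the indices are bounded by $\ell$. Put $\beta=(a_{k+1},\ldots,a_\ell)$. If $k=\ell$, then $\beta$ is empty; this is the situation of Remark~\ref{rem:boundary}. Otherwise, maximality of $k$ forces $a_{k+1}\ne a_k=p$, so $\beta$ begins with a part different from $p$. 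By construction, $\alpha$ is the concatenation of $k$ copies of $p$ followed by $\beta$.

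For uniqueness, suppose $\alpha=(p^{k})\cdot\beta=(p'^{k'})\cdot\beta'$, where both decompositions satisfy the stated conditions. Comparing first entries gives $p=a_1=p'$. Now suppose, without loss of generality, that $k<k'$. Then position $k+1$ of $\alpha$ lies inside the second run, so $a_{k+1}=p$. On the other hand, $k<k'\le\ell$ means $\beta$ is nonempty, and its first entry is $a_{k+1}$, which must differ from $p$. This is a contradiction, so $k=k'$. Once $p$ and $k$ agree, $\beta=\beta'$ as the common remaining suffix.

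No step here is a real obstacle. The only point needing care is the boundary case in which $\beta$ is empty: the condition on $\beta$ is vacuous there, and the uniqueness argument must not assume $\beta$ has a first part. This is why we phrase the contradiction through $k<k'\le\ell$, which guarantees $\beta$ is nonempty. As a consequence, the uniqueness shows that the generating function of nonempty compositions factors as $\sum_{p\ge1} w_p(z)R_p(z)$, combining the run factor $w_p$ with the admissible-tail lemma. This factorization is the structural input used afterward.
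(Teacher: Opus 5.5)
Your proof is correct and follows the paper's argument: take $p=a_1$, let $k$ be maximal, and use maximality to get $a_{k+1}\ne p$ whenever $\beta$ is nonempty. Your explicit uniqueness step (first entries force $p=p'$, and $k<k'$ contradicts the condition on $\beta$) spells out what the paper compresses into ``these choices are unique''; it also handles the case of empty $\beta$ correctly.
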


\begin{proof}
Take $p=a_1$ and let $k$ be the largest integer for which $a_1=\cdots=a_k=p$. These choices are unique. If further parts exist, maximality gives $a_{k+1}\ne p$. Conversely, any such triple $(p,k,\beta)$ determines a unique composition.
\end{proof}

As a consistency check, Proposition~\ref{prop:decomp} implies
\[
\sum_{p\ge1}w_p(z)R_p(z)
=T(z)\sum_{p\ge1}z^p
=\frac{1-z}{1-2z}\frac{z}{1-z}
=\frac{z}{1-2z},
\]
the generating function for nonempty compositions.

\section{A master generating function}

We use the statistics $P(\alpha)$, $K(\alpha)$, and $S(\alpha)$ from Definition~\ref{def:initialrun}.

\begin{theorem}[Master generating function]\label{thm:master}
Let $x$ mark $P$, $u$ mark $K$, and $v$ mark $S$. Then
\[
\boxed{
\mathcal J(z;x,u,v)
=T(z)\sum_{p\ge1}x^p(1-z^p)
\frac{u(vz)^p}{1-u(vz)^p}.
}
\]
More explicitly,
\[
\mathcal J(z;x,u,v)
=\sum_{\alpha\ne\varnothing}
 x^{P(\alpha)}u^{K(\alpha)}v^{S(\alpha)}z^{|\alpha|}.
\]
\end{theorem}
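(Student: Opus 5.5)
The plan is to compute the weighted sum $\sum_{\alpha\ne\varnothing}x^{P(\alpha)}u^{K(\alpha)}v^{S(\alpha)}z^{|\alpha|}$ by grouping compositions according to the canonical decomposition of Proposition~\ref{prop:decomp}. That proposition gives a bijection between nonempty compositions $\alpha$ and triples $(p,k,\beta)$. Here $p,k\ge1$, and $\beta$ is empty or begins with a part different from $p$. Under this bijection $P(\alpha)=p$, $K(\alpha)=k$, $S(\alpha)=pk$, and $|\alpha|=pk+|\beta|$. So the monomial attached to $\alpha$ factors as
\[
x^{p}\,u^{k}v^{pk}z^{pk}\cdot z^{|\beta|}=x^p\bigl(u(vz)^p\bigr)^k\cdot z^{|\beta|}.
\]
The only substantive point is that the statistics $P$, $K$, $S$ depend on the run part $(p,k)$ alone, not on the tail. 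This is exactly why the initial run separates cleanly.

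For fixed $p$, I will sum over $k\ge1$ and over admissible tails $\beta$ independently. This is legitimate because the admissibility condition on $\beta$ depends only on $p$, not on $k$. The sum over $k$ is geometric:
\[
\sum_{k\ge1}\bigl(u(vz)^p\bigr)^k=\frac{u(vz)^p}{1-u(vz)^p},
\]
a refinement of $w_p(z)$ with $u$ and $v$ inserted. The sum over $\beta$ is $R_p(z)=(1-z^p)T(z)$ by the Admissible tail lemma. Multiplying these and summing over $p\ge1$ with the weight $x^p$, then pulling the common factor $T(z)$ out of the sum, gives the boxed formula.

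The step needing the most care is justifying the rearrangement as an identity of formal power series. I would treat $\mathcal J$ as a series in $z$ with coefficients in $\mathbb{Z}[x,u,v]$. The coefficient of $z^n$ involves only finitely many compositions, and the $p$-th summand is $O(z^p)$, so the sum over $p$ converges formally. Likewise the $k$-th term is $O(z^{pk})$, so the geometric expansion is valid formally. With that noted, the bijection makes the coefficient comparison term by term exact. As a sanity check, I would confirm that setting $x=u=v=1$ recovers the consistency computation $\sum_p w_pR_p=z/(1-2z)$ given after Proposition~\ref{prop:decomp}.
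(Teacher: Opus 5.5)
Your proposal is correct and follows the paper's own proof: use the canonical decomposition, sum the geometric series over $k$ for fixed $p$, multiply by the admissible-tail factor $R_p(z)=(1-z^p)T(z)$, and sum over $p$. Your extra remarks on convergence as formal power series in $z$ over $\mathbb{Z}[x,u,v]$ are a valid refinement that the paper leaves implicit.
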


\begin{proof}
For fixed $p$, an initial run of length $k\ge1$ contributes
\[
x^p u^k v^{pk}z^{pk}.
\]
Summing over $k$ gives
\[
x^p\sum_{k\ge1}u^k(vz)^{pk}
=x^p\frac{u(vz)^p}{1-u(vz)^p}.
\]
The admissible tail contributes $R_p(z)=(1-z^p)T(z)$. Summing over $p\ge1$ gives the result.
\end{proof}

\begin{remark}[Sanity check]
Setting $x=u=v=1$ gives
\[
\mathcal J(z;1,1,1)
=T(z)\sum_{p\ge1}z^p
=\frac{z}{1-2z},
\]
as required.
\end{remark}

\section{Exact enumeration and finite stabilization}

Define
\[
t_m=
\begin{cases}
0,&m<0,\\
1,&m=0,\\
2^{m-1},&m\ge1.
\end{cases}
\]
Thus $t_m$ counts all compositions of $m$, including the empty composition when $m=0$.

For each $n\ge1$, let $\alpha_n$ be uniformly distributed on $\C_n$, and define
\[
P_n=P(\alpha_n),\qquad K_n=K(\alpha_n),\qquad S_n=S(\alpha_n).
\]

\begin{theorem}[Exact joint enumeration]\label{thm:finitepk}
Let $N_n(p,k)$ be the number of compositions $\alpha\in\C_n$ satisfying
\[
P(\alpha)=p,\qquad K(\alpha)=k.
\]
Then
\[
\boxed{
N_n(p,k)=t_{n-pk}-t_{n-p(k+1)}.
}
\]
\end{theorem}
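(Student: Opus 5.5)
The plan is to read $N_n(p,k)$ off as a coefficient, using the decomposition of Proposition~\ref{prop:decomp} together with the Admissible tail lemma. By Proposition~\ref{prop:decomp}, the compositions $\alpha\in\C_n$ with $P(\alpha)=p$ and $K(\alpha)=k$ are in bijection with the tails $\beta$ of size $n-pk$ that are empty or begin with a part different from $p$. So $N_n(p,k)=[z^{n-pk}]R_p(z)$, where coefficients at negative indices are read as $0$. Equivalently, we could extract $[x^pu^kz^n]$ from Theorem~\ref{thm:master} at $v=1$, which gives $[z^n]\,z^{pk}(1-z^p)T(z)$; both routes lead to the same coefficient.

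Next, I will use $R_p(z)=(1-z^p)T(z)$. By definition $[z^m]T(z)=t_m$ for every integer $m$, including the conventions $t_0=1$ and $t_m=0$ for $m<0$. Hence
\[
[z^{m}](1-z^p)T(z)=t_m-t_{m-p}.
\]
Substituting $m=n-pk$ gives $t_{n-pk}-t_{n-pk-p}=t_{n-pk}-t_{n-p(k+1)}$, which is the boxed formula.

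The only point needing care is the boundary cases, so that the formula holds for all $n,p,k\ge1$ and not just generically. If $pk>n$, both terms vanish, and indeed $N_n(p,k)=0$. If $pk=n$, the tail must be empty, so $N_n(p,k)=1$, and the formula gives $t_0-t_{-p}=1$. If $n-pk\ge1$, the subtraction counts all tails of size $n-pk$ minus those starting with $p$. There are exactly $t_{n-pk-p}$ of the latter, and this count correctly includes the single tail $(p)$ when $n-pk=p$. The extended definition of $t_m$ handles all of these cases uniformly, so I expect no real obstacle beyond checking these edge cases.
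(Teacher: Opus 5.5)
Your proposal is correct and is essentially the paper's argument. The paper counts the $t_{n-pk}$ tails of size $n-pk$ directly and subtracts the $t_{n-p(k+1)}$ tails that begin with another $p$, which is the same subtraction as your coefficient identity $[z^{n-pk}](1-z^p)T(z)=t_{n-pk}-t_{n-pk-p}$ from the admissible-tail lemma; your explicit check of the cases $pk>n$ and $pk=n$ makes precise what the paper leaves to the conventions for $t_m$.
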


\begin{proof}
After the prescribed first $k$ copies of $p$, the remaining size is $n-pk$. There are $t_{n-pk}$ possible tails. Among these, the tails beginning with another $p$ are obtained by fixing that first additional $p$ and appending an arbitrary composition of the remaining size $n-p(k+1)$, giving $t_{n-p(k+1)}$ possibilities. Their subtraction enforces maximality of the initial run.
\end{proof}

\begin{corollary}[Finite stabilization of local events]\label{cor:stabilization-pk}
For fixed $p,k\ge1$, if $n>p(k+1)$, then for a uniformly chosen $\alpha_n\in\C_n$,
\[
\boxed{
\Pp(P_n=p,K_n=k)=2^{-pk}(1-2^{-p}).
}
\]
Thus, as a direct consequence of the exact geometric formula $|\C_n|=2^{n-1}$, the probability of each fixed event $(P_n,K_n)=(p,k)$ becomes exactly constant after a finite threshold.
\end{corollary}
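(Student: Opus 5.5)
The plan is to read the probability directly off Theorem~\ref{thm:finitepk}. Dividing by $|\C_n|=2^{n-1}$ gives
\[
\Pp(P_n=p,K_n=k)=\frac{N_n(p,k)}{2^{n-1}}=\frac{t_{n-pk}-t_{n-p(k+1)}}{2^{n-1}}.
\]
The only issue is which branch of the piecewise definition of $t_m$ applies to each index. So the first step is to check that the hypothesis $n>p(k+1)$ makes both indices strictly positive. Indeed $n-p(k+1)\ge1$, and hence also $n-pk\ge p+1\ge2$. Therefore both terms fall in the case $t_m=2^{m-1}$, and neither $t_0=1$ nor $t_m=0$ for $m<0$ can occur.

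With that settled, the computation is routine:
\[
\frac{2^{n-pk-1}-2^{n-pk-p-1}}{2^{n-1}}=2^{-pk}-2^{-pk-p}=2^{-pk}\bigl(1-2^{-p}\bigr).
\]
The result does not depend on $n$, which is exactly the claimed finite stabilization. There is no genuine obstacle here. The one point requiring care is confirming that the threshold $n>p(k+1)$ is precisely what rules out the boundary values of $t_m$.

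As an optional remark, one could also note that the threshold is sharp. At $n=p(k+1)$ the second term is $t_0=1$ rather than $2^{-1}$, so the formula fails there. Similarly, for $pk\le n<p(k+1)$ the second term vanishes, and for $n<pk$ both terms vanish, so the probability differs from the limit. This observation is not needed for the corollary itself.
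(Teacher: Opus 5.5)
Your proposal is correct and follows the same route as the paper: substitute the exact count $t_{n-pk}-t_{n-p(k+1)}$ from Theorem~\ref{thm:finitepk}, use $n>p(k+1)$ to ensure both indices are positive so that $t_m=2^{m-1}$ applies, and divide by $2^{n-1}$. Your optional sharpness remark is also accurate: for every $1\le n\le p(k+1)$ the probability differs from $2^{-pk}(1-2^{-p})$.
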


\begin{proof}
Under the stated inequality, both $n-pk$ and $n-p(k+1)$ are positive. Hence Theorem~\ref{thm:finitepk} gives
\[
N_n(p,k)=2^{n-pk-1}-2^{n-p(k+1)-1}.
\]
Dividing by $|\C_n|=2^{n-1}$ yields the formula.
\end{proof}

\begin{example}
For $n=3$, the four compositions are
\[
(3),\quad(2,1),\quad(1,2),\quad(1,1,1).
\]
Their triples $(P,K,S)$ are respectively
\[
(3,1,3),\quad(2,1,2),\quad(1,1,1),\quad(1,3,3).
\]
The total weights are therefore
\[
\sum_{\alpha\in\C_3}S(\alpha)=9,
\qquad
\sum_{\alpha\in\C_3}K(\alpha)=6.
\]
\end{example}

\section{Limiting distributions}

Corollary~\ref{cor:stabilization-pk} immediately gives a limiting joint distribution.

\begin{corollary}[Joint limit law]\label{cor:jointlaw}
For $p,k\ge1$,
\[
\boxed{
\Pp(P=p,K=k)=2^{-pk}(1-2^{-p}).
}
\]
These probabilities sum to $1$.
\end{corollary}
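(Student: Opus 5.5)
The plan is to read the joint limit law directly off Corollary~\ref{cor:stabilization-pk} and then check normalization separately. For fixed $p,k\ge1$, the probability $\Pp(P_n=p,K_n=k)$ is constant for all $n>p(k+1)$. Its value there is $2^{-pk}(1-2^{-p})$. Hence the limit $\lim_{n\to\infty}\Pp(P_n=p,K_n=k)$ exists and equals this value. This is what the boxed formula means by $\Pp(P=p,K=k)$.

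To confirm that these limits form a genuine probability distribution, and no mass escapes to infinity, I will sum them directly. For fixed $p$, I sum over $k\ge1$ as a geometric series:
\[
\sum_{k\ge1}2^{-pk}(1-2^{-p})=(1-2^{-p})\frac{2^{-p}}{1-2^{-p}}=2^{-p}.
\]
Summing next over $p\ge1$ gives $\sum_{p\ge1}2^{-p}=1$. All terms are nonnegative, so Tonelli justifies the order of summation.

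As a cross-check, the same identity is the master generating function of Theorem~\ref{thm:master} evaluated at $x=u=v=1$ after the rescaling $z\mapsto 1/2$ at the level of each summand. Specifically, $(1-z^p)\,z^p/(1-z^p)=z^p$ at $z=1/2$. This mirrors the sanity-check remark.

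I expect no real obstacle. The only point needing care is interpretation: pointwise convergence of the finite laws is immediate from stabilization, and it is the normalization computation that upgrades it to convergence in distribution. By Scheffé's lemma, or simply tightness, this gives convergence of $(P_n,K_n)$ to the joint law stated.
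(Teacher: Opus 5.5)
Your proposal is correct and matches the paper's proof: the pointwise limit is read off from the finite stabilization in Corollary~\ref{cor:stabilization-pk}, and normalization follows by summing the geometric series in $k$ to get $2^{-p}$ and then summing over $p$. Your remarks about Tonelli, Scheff\'e's lemma, and the generating-function cross-check go beyond what the paper says, but they are harmless.
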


\begin{proof}
The pointwise limit follows from finite stabilization. Moreover,
\[
\sum_{k\ge1}2^{-pk}(1-2^{-p})=2^{-p},
\]
and therefore
\[
\sum_{p\ge1}\sum_{k\ge1}2^{-pk}(1-2^{-p})
=\sum_{p\ge1}2^{-p}=1.
\]
\end{proof}

\begin{corollary}[Distribution of the first part]\label{cor:P}
\[
\boxed{\Pp(P=p)=2^{-p},\qquad p\ge1.}
\]
In particular,
\[
\E P=2,
\qquad
\operatorname{Var}(P)=2.
\]
\end{corollary}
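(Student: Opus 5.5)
The plan is to marginalize the joint limit law of Corollary~\ref{cor:jointlaw} over $k$ and then read off the first two moments from the resulting geometric law.

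\emph{Step 1: the law of $P$.} For fixed $p\ge1$, sum the joint probabilities over $k\ge1$:
\[
\Pp(P=p)=\sum_{k\ge1}2^{-pk}(1-2^{-p})=(1-2^{-p})\frac{2^{-p}}{1-2^{-p}}=2^{-p}.
\]
This geometric sum is already computed inside the proof of Corollary~\ref{cor:jointlaw}, so it can simply be cited. Interchanging the limit with the sum over $k$ is legitimate for either of two reasons: the terms are nonnegative and sum to $1$ (Scheffé / Fatou), or we can argue at finite $n$ directly. The finite-$n$ route is the cleaner one. The compositions of $n$ with first part $p$ are $p$ followed by an arbitrary composition of $n-p$. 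There are $t_{n-p}$ of them, so $\Pp(P_n=p)=2^{-p}$ exactly whenever $n>p$. This is a finite stabilization statement of the same kind as Corollary~\ref{cor:stabilization-pk}.

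\emph{Step 2: the moments.} Under the limiting law, $P$ is geometric on $\{1,2,\dots\}$ with success probability $1/2$. We therefore have $\E P=\sum_{p\ge1}p2^{-p}=2$, using the standard identity $\sum_{p\ge1}px^{p}=x/(1-x)^2$ at $x=1/2$. For the second moment, we use $\sum_{p\ge1}p^2x^p=x(1+x)/(1-x)^3$ at $x=1/2$, which gives $\E P^2=(3/4)/(1/8)=6$. Hence $\operatorname{Var}(P)=6-4=2$. Equivalently, we can quote the geometric variance $(1-q)/q^2$ with $q=1/2$.

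\emph{Main obstacle.} There is no real obstacle here. The only point needing care is whether the moments are meant for the limiting variable $P$ or as limits of $\E P_n$. The statement concerns the limit law, so the computation above suffices. If one also wanted $\E P_n\to 2$, dominated convergence would apply: for each fixed $p$ we have $\Pp(P_n=p)\le 2^{-p}$, since $t_{n-p}/2^{n-1}\le 2^{-p}$ for all $n$ including the boundary case $p=n$.
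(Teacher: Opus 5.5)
Your argument is correct and matches the paper, which states this corollary without a separate proof as an immediate consequence of Corollary~\ref{cor:jointlaw}: the marginal $\Pp(P=p)=2^{-p}$ is exactly the geometric sum over $k$ already computed in that corollary's proof, and the mean and variance are then those of a geometric variable with parameter $1/2$. One slip in your closing aside: at $p=n$ you get $\Pp(P_n=n)=t_0/2^{n-1}=2^{1-n}$, which is twice $2^{-p}$, so your claimed bound fails there. The valid uniform bound is $\Pp(P_n=p)\le 2^{1-p}$, which is still summable against $p$ and $p^2$; alternatively, a direct computation gives $\E P_n=2-2^{1-n}$.
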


\begin{corollary}[Conditional run length]\label{cor:conditionalK}
Conditionally on $P=p$, the limiting variable $K$ is geometric on $\{1,2,\ldots\}$ with parameter $1-2^{-p}$:
\[
\boxed{
\Pp(K=k\mid P=p)=(1-2^{-p})2^{-p(k-1)}.
}
\]
\end{corollary}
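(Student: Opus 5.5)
The conditional law follows directly from the joint limit law in Corollary~\ref{cor:jointlaw} and the marginal law of $P$ in Corollary~\ref{cor:P}. The plan is simply to form the ratio
\[
\Pp(K=k\mid P=p)=\frac{\Pp(P=p,K=k)}{\Pp(P=p)},
\]
which is legitimate because $\Pp(P=p)=2^{-p}>0$ for every $p\ge1$.

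For completeness, I would first confirm the marginal by summing the joint law over $k$, exactly as in the proof of Corollary~\ref{cor:jointlaw}:
\[
\sum_{k\ge1}2^{-pk}(1-2^{-p})=2^{-p}.
\]
Then I divide $2^{-pk}(1-2^{-p})$ by $2^{-p}$, which gives $(1-2^{-p})2^{-p(k-1)}$. This is the geometric law on $\{1,2,\ldots\}$ with success probability $1-2^{-p}$. Its normalization is automatic, since the conditional probabilities are a ratio of a joint law to its own marginal.

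There is no substantive obstacle; the only point needing care is interpretation. The statement concerns the limiting pair $(P,K)$, so the conditioning should be done in the limit law and not on $\alpha_n$. As an optional remark, I would note a finite-$n$ version. For $n>p(k+1)$, Corollary~\ref{cor:stabilization-pk} gives the joint probability exactly. Summing Theorem~\ref{thm:finitepk} over $k$ telescopes to $\Pp(P_n=p)=t_{n-p}/2^{n-1}=2^{-p}$ for $n>p$. Hence the conditional probability $\Pp(K_n=k\mid P_n=p)$ already equals the stated geometric value once $n>p(k+1)$.
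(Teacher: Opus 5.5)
Your proposal is correct and is the intended argument: the paper gives no separate proof, treating the corollary as the ratio of the joint law in Corollary~\ref{cor:jointlaw} to the marginal $\Pp(P=p)=2^{-p}$ of Corollary~\ref{cor:P}, exactly as you do. Your optional finite-$n$ remark is also correct, since the sum over $k$ telescopes to $t_{n-p}=2^{n-p-1}$ for $n>p$.
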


\begin{corollary}[Distribution of the initial-run length]\label{cor:Klaw}
For $k\ge1$,
\[
\boxed{
\Pp(K=k)=\frac{1}{2^k-1}-\frac{1}{2^{k+1}-1}.
}
\]
Equivalently,
\[
\boxed{
\Pp(K\ge k)=\frac{1}{2^k-1}.
}
\]
Consequently,
\[
\boxed{
\E K=\sum_{k\ge1}\frac{1}{2^k-1}=1.606695152415291\ldots,
}
\]
the Erd\H{o}s--Borwein constant.
\end{corollary}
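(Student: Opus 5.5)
The plan is to work from the joint limit law of Corollary~\ref{cor:jointlaw}. First compute the tail $\Pp(K\ge k)$, then obtain the point masses by differencing, and finally get $\E K$ from the tail-sum formula. Every term involved is nonnegative, so exchanging the order of summation is justified by Tonelli's theorem.

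For the tail, fix $k\ge1$ and a value $p$. Summing the geometric series over $j\ge k$ gives
\[
\sum_{j\ge k}2^{-pj}(1-2^{-p})=2^{-pk}.
\]
Equivalently, this is $\Pp(P=p)\Pp(K\ge k\mid P=p)=2^{-p}\cdot 2^{-p(k-1)}$ by Corollaries~\ref{cor:P} and~\ref{cor:conditionalK}. Summing over $p\ge1$,
\[
\Pp(K\ge k)=\sum_{p\ge1}(2^{-k})^p=\frac{2^{-k}}{1-2^{-k}}=\frac{1}{2^k-1}.
\]
The point masses then follow from
\[
\Pp(K=k)=\Pp(K\ge k)-\Pp(K\ge k+1),
\]
which is exactly the stated difference. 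For $k=1$ the tail value is $1$, which is consistent with the law being a probability distribution.

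For the mean, use $\E K=\sum_{k\ge1}\Pp(K\ge k)=\sum_{k\ge1}1/(2^k-1)$. This series converges geometrically. It is the Erd\H{o}s--Borwein constant by definition, and its numerical value is the classical one, which can be quoted or checked by summing a few dozen terms.

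No step presents a real obstacle. The only points needing a sentence of justification are the interchange of the sums over $p$ and $k$, which holds by nonnegativity, and the fact that $\E K$ refers to the limiting variable $K$ and not to $\lim_n \E K_n$. If the latter were intended, a uniform integrability argument would be required, for instance a bound $\Pp(K_n\ge k)\le C\,2^{-k}$ derived from Theorem~\ref{thm:finitepk}. I will treat the statement as concerning the limit law only.
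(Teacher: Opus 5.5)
Your proof is correct and is essentially the paper's argument: both sum the joint law of Corollary~\ref{cor:jointlaw} over $p$ as geometric series and get $\E K$ from the tail-sum formula. The only difference is order: you compute $\Pp(K\ge k)$ first and difference it, while the paper computes $\Pp(K=k)$ first and recovers the tail by telescoping.
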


\begin{proof}
Summing Corollary~\ref{cor:jointlaw} over $p$ gives
\[
\Pp(K=k)=\sum_{p\ge1}\left(2^{-pk}-2^{-p(k+1)}\right)
=\frac{1}{2^k-1}-\frac{1}{2^{k+1}-1}.
\]
The tail identity follows by telescoping. The expectation is the standard tail sum
\[
\E K=\sum_{k\ge1}\Pp(K\ge k).
\]
The constant was studied arithmetically by Erd\H{o}s~\cite{Erdos1948} and Borwein~\cite{Borwein1992}.
\end{proof}

We now turn to the total initial-run size $S=PK$.

\begin{theorem}[Distribution of the total initial-run size]\label{thm:Slaw}
For $s\ge1$,
\[
\boxed{
\Pp(S=s)=2^{-s}\sum_{d\mid s}(1-2^{-d}).
}
\]
Equivalently,
\[
\Pp(S=s)=2^{-s}\left(\tau(s)-\sum_{d\mid s}2^{-d}\right),
\]
where $\tau(s)=\sigma_0(s)$ is the number-of-divisors function.
\end{theorem}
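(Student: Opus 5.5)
The plan is to read the statement off the joint limit law in Corollary~\ref{cor:jointlaw}. The event $\{S=s\}$ is the disjoint union of the events $\{P=p,\,K=k\}$ over all pairs of positive integers with $pk=s$. Such pairs correspond bijectively to the divisors $d=p$ of $s$, with $k=s/d$. Summing the joint probabilities therefore gives
\[
\Pp(S=s)=\sum_{d\mid s}2^{-d\cdot(s/d)}\bigl(1-2^{-d}\bigr)=2^{-s}\sum_{d\mid s}\bigl(1-2^{-d}\bigr),
\]
because $pk=s$ forces $2^{-pk}=2^{-s}$ for every term. The second form follows by splitting the sum: $\sum_{d\mid s}1=\tau(s)$.

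The one point to address is what ``$\Pp(S=s)$'' means: a law for the limiting pair $(P,K)$, which is then transported to $S=PK$. I would also check that it agrees with the limit of $\Pp(S_n=s)$.

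For fixed $s$, only finitely many pairs $(p,k)$ satisfy $pk=s$. For each of them, Corollary~\ref{cor:stabilization-pk} gives exact stabilization once $n>p(k+1)=s+p$. Hence for $n>2s$, which covers the worst case $p=s$, we get $\Pp(S_n=s)=\Pp(S=s)$ exactly. So the limit is attained, and no interchange of limit and infinite sum is needed.

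Finally, as a sanity check that the values form a probability distribution, I would note that summing over $s$ amounts to regrouping the absolutely convergent double series in Corollary~\ref{cor:jointlaw}, which totals $1$. There is no real obstacle here: the only care needed is the divisor bookkeeping $p=d$, $k=s/d$, and the finite-threshold remark.
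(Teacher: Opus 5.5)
Your proposal is correct and is the paper's proof: split $\{S=s\}$ over the divisors $d=P$ of $s$ with $K=s/d$, and sum the joint law of Corollary~\ref{cor:jointlaw}, using $2^{-pk}=2^{-s}$. Your added finite-threshold check ($n>2s$) is exactly the content of the paper's subsequent Proposition~\ref{prop:finiteS}.
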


\begin{proof}
The equality $S=PK=s$ holds exactly when $P=d$ is a positive divisor of $s$ and $K=s/d$. Substitution into Corollary~\ref{cor:jointlaw} gives
\[
\Pp(P=d,K=s/d)=2^{-s}(1-2^{-d}).
\]
Summing over $d\mid s$ proves the formula.
\end{proof}

\begin{proposition}[Exact finite law for $S_n$]\label{prop:finiteS}
For $s\ge1$,
\[
\#\{\alpha\in\C_n:S(\alpha)=s\}
=\sum_{d\mid s}\bigl(t_{n-s}-t_{n-s-d}\bigr).
\]
In particular, if $n>2s$, then
\[
\boxed{
\Pp(S_n=s)=\Pp(S=s).
}
\]
\end{proposition}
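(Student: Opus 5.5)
The plan is to partition the event $\{S(\alpha)=s\}$ according to the value of the first part and then apply Theorem~\ref{thm:finitepk} to each piece. By Definition~\ref{def:initialrun}, $S(\alpha)=P(\alpha)K(\alpha)$ with $P(\alpha),K(\alpha)\ge1$. Hence $S(\alpha)=s$ holds exactly when $P(\alpha)=d$ for some positive divisor $d\mid s$ and $K(\alpha)=s/d$. The events $\{P=d,\,K=s/d\}$ for distinct divisors $d$ are disjoint, since $P$ is determined by $\alpha$. Therefore
\[
\#\{\alpha\in\C_n:S(\alpha)=s\}=\sum_{d\mid s}N_n(d,s/d).
\]
Substituting $p=d$ and $k=s/d$ into Theorem~\ref{thm:finitepk} gives $pk=s$ and $p(k+1)=s+d$. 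Each summand is therefore $t_{n-s}-t_{n-s-d}$, which is the claimed exact formula. The conventions for $t_m$ with $m\le0$ already cover the cases $n<s$ and $n=s$, so no separate case analysis is needed.

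For stabilization, assume $n>2s$. Every divisor satisfies $d\le s$, so $n-s-d\ge n-2s>0$, and also $n-s>0$. Both indices are then positive, so $t_{n-s}-t_{n-s-d}=2^{n-s-1}-2^{n-s-d-1}$. Dividing by $|\C_n|=2^{n-1}$ gives $2^{-s}(1-2^{-d})$ for each divisor. Summing over $d\mid s$ yields exactly the expression in Theorem~\ref{thm:Slaw}, so $\Pp(S_n=s)=\Pp(S=s)$.

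Alternatively, one can apply Corollary~\ref{cor:stabilization-pk} with $p=d$ and $k=s/d$. Its hypothesis $n>d(s/d+1)=s+d$ holds for every divisor whenever $n>2s$. There is no real obstacle here. The only points needing care are that the divisor decomposition is a genuine partition of the event, and that the threshold $2s$ is the worst case $d=s$ (that is, $P=s$, $K=1$) over all divisors.
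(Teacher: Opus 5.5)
Your proposal is correct and follows essentially the same route as the paper: you split the event $\{S=s\}$ over divisors $d\mid s$ via $(P,K)=(d,s/d)$, apply Theorem~\ref{thm:finitepk}, and observe that $n>2s$ makes every index $n-s-d$ positive, with the binding case $d=s$. After that, dividing by $2^{n-1}$ gives each summand $2^{-s}(1-2^{-d})$, which is Theorem~\ref{thm:Slaw}.
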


\begin{proof}
For each divisor $d\mid s$, impose $P=d$ and $K=s/d$ in Theorem~\ref{thm:finitepk}. This gives
\[
t_{n-s}-t_{n-s-d}.
\]
Summing over $d\mid s$ gives the exact count. The largest possible divisor is $d=s$, so the condition $n-s-d>0$ for every divisor reduces to $n>2s$. Under this condition, division by $2^{n-1}$ reduces each summand to
\[
2^{-s}(1-2^{-d}),
\]
which is exactly Theorem~\ref{thm:Slaw}.
\end{proof}

\section{Pointing and Lambert series}\label{sec:pointing}

By Definition~\ref{def:pointing}, $S(\alpha)$ is exactly the number of possible unit-cell pointings of the initial run of $\alpha$. For fixed $p$, the first run has generating function
\[
w_p(z)=\frac{z^p}{1-z^p}.
\]
Let
\[
\Thetaop=z\frac{d}{dz}.
\]
Then
\[
\Thetaop w_p(z)
=\frac{pz^p}{(1-z^p)^2}
=\sum_{k\ge1}pk\,z^{pk},
\]
so that the coefficient of $z^{pk}$ is $pk$, the number of unit cells available to be distinguished in a run of $k$ parts of size $p$.

For $j\ge0$, write
\[
\sigma_j(n)=\sum_{d\mid n}d^j.
\]
The classical Lambert series is
\[
D_j(z)=\sum_{d\ge1}\frac{d^jz^d}{1-z^d}
=\sum_{n\ge1}\sigma_j(n)z^n.
\]

\begin{theorem}[Pointed initial-run generating function]\label{thm:M1}
Let
\[
M_1(z)=\sum_{\alpha\ne\varnothing}S(\alpha)z^{|\alpha|}.
\]
Then
\[
\boxed{
M_1(z)=T(z)\sum_{p\ge1}\frac{pz^p}{1-z^p}
=\frac{1-z}{1-2z}\sum_{n\ge1}\sigma_1(n)z^n.
}
\]
\end{theorem}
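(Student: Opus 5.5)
The plan is to obtain $M_1$ from the master generating function of Theorem~\ref{thm:master} by differentiating in $v$. Set $x=u=1$ and apply $\partial_v$ at $v=1$. On the combinatorial side, this turns
\[
\sum_{\alpha\ne\varnothing}v^{S(\alpha)}z^{|\alpha|}
\]
into
\[
\sum_{\alpha\ne\varnothing}S(\alpha)z^{|\alpha|}=M_1(z).
\]
This is legitimate termwise because, for each fixed $n$, only finitely many compositions contribute. By Definition~\ref{def:pointing}, the same sum counts pointed initial-run compositions, which matches the pointing interpretation.

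On the analytic side, the factor $T(z)(1-z^p)$ does not depend on $v$. Write $y=vz$. Then
\[
v\,\partial_v\, \frac{y^p}{1-y^p}
\]
equals $\Thetaop$ applied in the variable $y$. At $v=1$ this gives
\[
\Thetaop w_p(z)=\frac{pz^p}{(1-z^p)^2},
\]
as computed just before the theorem. Multiplying by $(1-z^p)$ cancels one factor of the denominator. The result is
\[
M_1(z)=T(z)\sum_{p\ge1}\frac{pz^p}{1-z^p}.
\]
Alternatively, one can argue directly from Proposition~\ref{prop:decomp}. Weighting the run of $k$ copies of $p$ by $pk$ gives $\sum_k pk\,z^{pk}=\Thetaop w_p(z)$. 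The admissible tail, by the Admissible tail lemma, contributes $R_p(z)=(1-z^p)T(z)$. Summing over $p$ gives the same formula. I will present this direct version, since it avoids differentiating the master series.

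The last equality is the classical Lambert expansion with $j=1$:
\[
\sum_{p\ge1}\frac{pz^p}{1-z^p}=D_1(z)=\sum_{n\ge1}\sigma_1(n)z^n.
\]
To verify it, expand $1/(1-z^p)$ geometrically and collect the coefficient of $z^n$. This picks up $p$ from each pair with $pm=n$, giving $\sigma_1(n)$. Substituting $T(z)=(1-z)/(1-2z)$ completes the proof.

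The only point needing care is justifying the interchange of sums and derivatives, in both the $p$- and $k$-sums. It is harmless because every series involved has nonnegative coefficients, and each coefficient of $z^n$ receives only finitely many contributions, namely from $p\le n$ and $k\le n$. So all identities hold as formal power series.
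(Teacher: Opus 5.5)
Your proposal is correct, and the direct version you chose to present is exactly the paper's proof: weight the run $p^k$ by $pk$ to obtain $\Thetaop w_p(z)$, multiply by the admissible tail $R_p(z)=(1-z^p)T(z)$ so that one factor of $1-z^p$ cancels, and identify $\sum_{p\ge1}pz^p/(1-z^p)$ with $\sum_{n\ge1}\sigma_1(n)z^n$ by geometric expansion. Your alternative route, applying $\partial_v$ to the master generating function at $v=1$, is also valid, and the paper itself uses that approach later for the factorial moments.
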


\begin{proof}
For each $p$, point the initial run and append an admissible tail:
\[
M_1(z)=\sum_{p\ge1}\Thetaop w_p(z)R_p(z).
\]
Using the formulas above,
\[
M_1(z)
=\sum_{p\ge1}
\frac{pz^p}{(1-z^p)^2}(1-z^p)T(z)
=T(z)\sum_{p\ge1}\frac{pz^p}{1-z^p}.
\]
Finally, expanding each geometric denominator and collecting the coefficient of $z^n$ gives
\[
\sum_{p\ge1}\frac{pz^p}{1-z^p}
=\sum_{n\ge1}\left(\sum_{p\mid n}p\right)z^n
=\sum_{n\ge1}\sigma_1(n)z^n.
\]
\end{proof}

The first coefficients are
\[
M_1(z)=z+4z^2+9z^3+21z^4+41z^5+88z^6+172z^7+351z^8+\cdots.
\]

\begin{remark}[Relation with OEIS A066767]\label{rem:oeis}
Let
\[
a(n)=[z^n]M_1(z).
\]
OEIS A066767 is the sequence
\[
b(n)=\sum_{m=1}^n \sigma_1(m)2^{n-m},
\]
whose normalized values $b(n)/2^n$ are the $n$th partial sums of
$\sum_{m\ge1}\sigma_1(m)/2^m$~\cite{OEISA066767}.  Comparing this formula with
Theorem~\ref{thm:M1} gives
\[
\boxed{a(n)=b(n)-b(n-1)}
\]
with the convention $b(0)=0$.  Thus the coefficient sequence
\[
1,4,9,21,41,88,172,351,\ldots
\]
is the first-difference transform of A066767.  Theorem~\ref{thm:M1} gives this first-difference sequence a direct
combinatorial interpretation: $a(n)$ is the total number of unit-cell pointings
in initial runs of compositions of $n$.  Section~\ref{sec:power} gives the
corresponding formulas for higher moments in terms of generalized divisor sums.
\end{remark}

\begin{corollary}[Exact first moment]\label{cor:exactmeanS}
For a uniformly chosen composition of $n$,
\[
\boxed{
\E S_n
=\sum_{m=1}^{n-1}\frac{\sigma_1(m)}{2^m}
+\frac{\sigma_1(n)}{2^{n-1}}.
}
\]
\end{corollary}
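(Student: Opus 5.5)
The plan is to extract the coefficient of $z^n$ from Theorem~\ref{thm:M1} and then divide by $|\C_n|=2^{n-1}$. Since $\E S_n=[z^n]M_1(z)/2^{n-1}$, the task reduces to a closed form for $a(n)=[z^n]M_1(z)$. This is a Cauchy product of the two factors $T(z)=\sum_{m\ge0}t_m z^m$ and $\sum_{m\ge1}\sigma_1(m)z^m$, so
\[
a(n)=\sum_{m=1}^{n}\sigma_1(m)\,t_{n-m}.
\]

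The single point needing care is the boundary term $m=n$. There $t_0=1$ rather than $2^{-1}$, whereas $t_{n-m}=2^{n-m-1}$ for $1\le m\le n-1$. I will therefore split off this term and write
\[
a(n)=\sum_{m=1}^{n-1}\sigma_1(m)2^{n-m-1}+\sigma_1(n).
\]
Dividing by $2^{n-1}$ turns the first sum into $\sum_{m=1}^{n-1}\sigma_1(m)2^{-m}$ and the last term into $\sigma_1(n)/2^{n-1}$, which is exactly the boxed formula.

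As a check, I will test $n=1,2,3$ against the listed coefficients $1,4,9$. For $n=3$, the formula gives $\E S_3=1/2+3/4+4/4=9/4$, which matches the total weight $9$ over the $4$ compositions in the earlier example.

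There is no genuine obstacle here. The only pitfall is the $t_0=1$ convention, and treating the $m=n$ term separately handles it. Alternatively, the formula follows from Remark~\ref{rem:oeis}: $a(n)=b(n)-b(n-1)$ gives $a(n)=b(n)/2+\sigma_1(n)/2$ after simplification, which leads to the same expression.
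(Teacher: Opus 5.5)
Your proposal is correct and matches the paper's proof: both extract $[z^n]M_1(z)$ as a Cauchy product with $T(z)$, handle the $m=n$ term separately because $t_0=1$, and divide by $2^{n-1}$. Your numerical check and the alternative derivation through Remark~\ref{rem:oeis} are also correct.
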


\begin{proof}
Since
\[
T(z)=1+\sum_{j\ge1}2^{j-1}z^j,
\]
Theorem~\ref{thm:M1} gives
\[
[z^n]M_1(z)=\sigma_1(n)+\sum_{m=1}^{n-1}2^{n-m-1}\sigma_1(m).
\]
Divide by $2^{n-1}$.
\end{proof}

\begin{corollary}[Limiting first moment]\label{cor:limitmeanS}
\[
\boxed{
\lim_{n\to\infty}\E S_n
=\sum_{m\ge1}\frac{\sigma_1(m)}{2^m}
=\sum_{p\ge1}\frac{p}{2^p-1}
=2.744033888759488\ldots.
}
\]
\end{corollary}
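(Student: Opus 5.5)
The plan is to pass to the limit in the exact formula of Corollary~\ref{cor:exactmeanS}, and then rewrite the limiting Dirichlet-type series as a Lambert-series value.

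\emph{First equality.} The second term in Corollary~\ref{cor:exactmeanS} is $\sigma_1(n)/2^{n-1}$. Since $\sigma_1(n)\le n(n+1)/2$, this term is at most $n(n+1)2^{-n}$, which tends to $0$. The first term is the partial sum $\sum_{m=1}^{n-1}\sigma_1(m)2^{-m}$. The full series converges absolutely by the same polynomial bound, so the partial sums converge to $\sum_{m\ge1}\sigma_1(m)/2^m$. This gives the first equality.

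\emph{Second equality.} I would evaluate the Lambert series of Theorem~\ref{thm:M1},
\[
\sum_{n\ge1}\sigma_1(n)z^n=\sum_{p\ge1}\frac{pz^p}{1-z^p},
\]
at $z=1/2$. This requires justifying that the identity holds at $z=1/2$. The double series $\sum_{p,k\ge1}p\,z^{pk}$ has nonnegative terms. Regrouping it by $n=pk$ gives the left side, and summing over $k$ first gives the right side. Tonelli's theorem allows either order, and both orders are finite for $|z|<1$. Substituting $z=1/2$ then gives
\[
\frac{p2^{-p}}{1-2^{-p}}=\frac{p}{2^p-1}.
\]

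As a consistency check, the same constant equals $\E S=\sum_{p,k}pk\,2^{-pk}(1-2^{-p})$ computed from Corollary~\ref{cor:jointlaw}. Summing over $k$ gives
\[
p(1-2^{-p})\frac{2^{-p}}{(1-2^{-p})^2}=\frac{p}{2^p-1},
\]
which agrees. This is not needed for the proof, since convergence of moments is already handled directly by the exact formula.

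\emph{Numerical value.} The decimal value $2.744033888759488\ldots$ would be obtained by truncating $\sum_p p/(2^p-1)$. For $p\ge 2$ the summand is at most $2p2^{-p}$, so the tail after $P$ terms is $O(P2^{-P})$. Summing about $60$ terms and bounding the remainder gives the stated digits.

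The only step needing any care is the interchange of summation. Nonnegativity makes it routine, so I expect no real obstacle.
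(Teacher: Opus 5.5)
Your proposal is correct and follows the paper's argument: pass to the limit in the exact formula of Corollary~\ref{cor:exactmeanS} using the polynomial growth of $\sigma_1$, then evaluate the Lambert-series identity at $z=1/2$. The Tonelli justification, the explicit bound $\sigma_1(n)\le n(n+1)/2$, and the tail estimate for the decimal value supply details that the paper leaves implicit.
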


\begin{remark}[The limiting constant]
The constant
\[
\sum_{m\ge1}\frac{\sigma_1(m)}{2^m}
=\sum_{p\ge1}\frac{p}{2^p-1}
=2.74403388875948836048\ldots
\]
predates the present interpretation and is catalogued as OEIS A066766~\cite{OEISA066766}.
Here the same constant appears as the limiting expected total size of the
initial run.  The exact finite formulas and the higher-moment identities below
give the corresponding composition interpretation.
\end{remark}

\begin{proof}
The exact formula in Corollary~\ref{cor:exactmeanS} converges because $\sigma_1(m)$ grows at most polynomially, while $2^{-m}$ decays exponentially. The equivalence of the two infinite sums follows from the Lambert-series identity evaluated at $z=1/2$.
\end{proof}

\begin{remark}[Analytic viewpoint]
Theorem~\ref{thm:M1} also gives a one-line singularity argument. The Lambert series $D_1(z)$ is analytic in $|z|<1$, while $T(z)$ has a simple pole at $z=1/2$. Thus the dominant singularity of $M_1$ is inherited from $T$, and the limiting mean is $D_1(1/2)$. The exact coefficient formula above is stronger and requires no asymptotic machinery.
\end{remark}

\section{All power moments and generalized divisor sums}\label{sec:power}

For $r\ge1$, define
\[
M_r(z)=\sum_{\alpha\ne\varnothing}S(\alpha)^r z^{|\alpha|}.
\]
Repeated application of $\Thetaop$ gives
\[
\Thetaop^r w_p(z)=\sum_{k\ge1}(pk)^r z^{pk}.
\]

\begin{theorem}[Power-moment divisor-sum hierarchy]\label{thm:powermoments}
For every integer $r\ge1$,
\[
\boxed{
M_r(z)=T(z)\sum_{n\ge1}A_r(n)z^n,
}
\]
where
\[
\boxed{
A_r(n)=\sum_{d\mid n}\bigl(n^r-(n-d)^r\bigr).
}
\]
Equivalently,
\[
\boxed{
A_r(n)=\sum_{j=1}^{r}(-1)^{j+1}\binom{r}{j}n^{r-j}\sigma_j(n).
}
\]
\end{theorem}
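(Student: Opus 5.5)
We follow the proof of Theorem~\ref{thm:M1}, replacing one application of $\Thetaop$ by $r$ applications. By Proposition~\ref{prop:decomp}, every nonempty composition splits uniquely as a run of $k$ copies of $p$ followed by an admissible tail. The weight $S(\alpha)^r=(pk)^r$ depends only on the run. Hence
\[
M_r(z)=\sum_{p\ge1}\Thetaop^r w_p(z)\,R_p(z)=T(z)\sum_{p\ge1}(1-z^p)\sum_{k\ge1}(pk)^r z^{pk},
\]
using $R_p(z)=(1-z^p)T(z)$ from the admissible-tail lemma. Convergence as formal power series is clear, since the $p$th term starts at $z^p$.

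The main step is to extract the coefficient of $z^n$ in the inner double sum. For fixed $p$, write $(1-z^p)\sum_{k\ge1}(pk)^r z^{pk}$ as a series in powers of $z^p$. The coefficient of $z^{pk}$ is $(pk)^r-(p(k-1))^r$. This holds for all $k\ge1$ with the convention $0^r=0$; at $k=1$ it gives $p^r-0=p^r$, which is correct. Setting $n=pk$, so that $p$ is a divisor $d$ of $n$ and $p(k-1)=n-d$, the coefficient of $z^n$ summed over $p$ is
\[
\sum_{d\mid n}\bigl(n^r-(n-d)^r\bigr)=A_r(n).
\]
This proves the first boxed form. The only delicate point is the $k=1$ boundary term, where the identity must still hold with $n-d=0$.

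For the equivalent form, expand $(n-d)^r=\sum_{j=0}^r\binom{r}{j}n^{r-j}(-d)^j$ by the binomial theorem. The $j=0$ term cancels $n^r$, which leaves
\[
n^r-(n-d)^r=\sum_{j=1}^r(-1)^{j+1}\binom{r}{j}n^{r-j}d^j.
\]
Summing over $d\mid n$ and using $\sigma_j(n)=\sum_{d\mid n}d^j$ gives the second boxed formula.

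As a sanity check, $r=1$ gives $A_1(n)=\sigma_1(n)$, which recovers Theorem~\ref{thm:M1}. Also $[z^3]M_2=9+1+4+9=23$ should match $T$ times $A_2$, where $A_2(1)=1$, $A_2(2)=6$ and $A_2(3)=14$. Indeed $14+6+2\cdot1=22$ disagrees, so the small cases must be recomputed before relying on them. Summing $S(\alpha)^2$ directly over the compositions of $3$ gives $3^2+2^2+1^2+3^2=23$, coming from the triples $(3,1,3),(2,1,2),(1,1,1),(1,3,3)$. Recompute $A_2(3)=\sum_{d\mid 3}(9-(3-d)^2)=(9-4)+(9-0)=14$ and $A_2(2)=(4-1)+(4-0)=7$, not $6$. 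With $A_2(2)=7$ we get $14+7+2=23$, which agrees. This small-case check is worth including in the write-up.
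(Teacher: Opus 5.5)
Your proof is correct and follows the paper's argument essentially verbatim: the same decomposition $M_r=\sum_{p}R_p\,\Theta^r w_p=T\sum_p(1-z^p)\sum_k (pk)^r z^{pk}$, the same coefficient extraction (including the $n-d=0$ boundary case), and the same binomial expansion. The one blemish is the sanity check. Drop the initial slip $A_2(2)=6$ from the write-up and state only the correct values $A_2(2)=7$ and $[z^3]M_2=14+7+2=23$.
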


\begin{proof}
By the initial-run decomposition,
\[
M_r(z)
=\sum_{p\ge1}R_p(z)\Thetaop^r w_p(z)
=T(z)\sum_{p\ge1}(1-z^p)\sum_{k\ge1}(pk)^r z^{pk}.
\]
Consider the coefficient of $z^n$ in the inner sum. The positive contribution occurs whenever $p\mid n$ and equals $n^r$. The shifted negative contribution also occurs for $p\mid n$ and equals $(n-p)^r$; when $p=n$ this is $0$, so the same formula remains valid. Hence
\[
A_r(n)=\sum_{p\mid n}\bigl(n^r-(n-p)^r\bigr).
\]
Expanding the difference by the binomial theorem gives
\[
n^r-(n-p)^r
=\sum_{j=1}^r(-1)^{j+1}\binom{r}{j}n^{r-j}p^j.
\]
Summation over $p\mid n$ yields the second formula.
\end{proof}

The first four arithmetic coefficient functions are
\begin{align*}
A_1(n)&=\sigma_1(n),\\
A_2(n)&=2n\sigma_1(n)-\sigma_2(n),\\
A_3(n)&=3n^2\sigma_1(n)-3n\sigma_2(n)+\sigma_3(n),\\
A_4(n)&=4n^3\sigma_1(n)-6n^2\sigma_2(n)+4n\sigma_3(n)-\sigma_4(n).
\end{align*}

\begin{corollary}[Lambert-series derivative form]\label{cor:thetaD}
Let
\[
D_j(z)=\sum_{n\ge1}\sigma_j(n)z^n
=\sum_{d\ge1}\frac{d^jz^d}{1-z^d}.
\]
Then
\[
\boxed{
M_r(z)
=T(z)\sum_{j=1}^{r}(-1)^{j+1}\binom{r}{j}\Thetaop^{\,r-j}D_j(z).
}
\]
\end{corollary}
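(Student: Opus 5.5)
The plan is to start from Theorem~\ref{thm:powermoments}, which gives $M_r(z)=T(z)\sum_{n\ge1}A_r(n)z^n$ with
\[
A_r(n)=\sum_{j=1}^{r}(-1)^{j+1}\binom{r}{j}n^{r-j}\sigma_j(n).
\]
Since the factor $T(z)$ is common to both sides of the claimed identity, it is enough to identify the series $\sum_{n\ge1}A_r(n)z^n$ with
\[
\sum_{j=1}^{r}(-1)^{j+1}\binom{r}{j}\Thetaop^{\,r-j}D_j(z).
\]
This comparison is made as formal power series, coefficient by coefficient.

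The key fact is the action of $\Thetaop=z\,d/dz$ on monomials: $\Thetaop z^n=nz^n$, and hence $\Thetaop^{m}z^n=n^mz^n$ for every $m\ge0$. I would apply this termwise to $D_j(z)=\sum_{n\ge1}\sigma_j(n)z^n$, obtaining
\[
\Thetaop^{\,r-j}D_j(z)=\sum_{n\ge1}n^{r-j}\sigma_j(n)z^n.
\]
Termwise differentiation is legitimate for formal power series, and also analytically in $|z|<1$, where $D_j$ converges. For $j=r$ the operator is the identity, so that case needs no separate treatment.

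Next I would form the finite linear combination over $j=1,\dots,r$ with coefficients $(-1)^{j+1}\binom{r}{j}$. Because the sum over $j$ is finite, it can be interchanged with the sum over $n$. The coefficient of $z^n$ is then exactly the second boxed expression for $A_r(n)$ in Theorem~\ref{thm:powermoments}. Multiplying by $T(z)$ gives the stated formula.

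I do not expect a genuine obstacle. The only point needing care is to note that the Lambert form $\sum_d d^jz^d/(1-z^d)$ of $D_j$ is used only through its coefficient expansion $\sum_n\sigma_j(n)z^n$. That expansion was already recorded before Theorem~\ref{thm:M1}, so no manipulation of the Lambert form under $\Thetaop$ is required.
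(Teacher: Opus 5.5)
Your proposal is correct and follows the same route as the paper: the paper's proof also notes that the coefficient of $z^n$ in $\Thetaop^{\,r-j}D_j(z)$ is $n^{r-j}\sigma_j(n)$ and then applies the second formula for $A_r(n)$ in Theorem~\ref{thm:powermoments}. Your additional remarks on termwise differentiation and on interchanging the finite sum over $j$ fill in routine details that the paper leaves implicit.
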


\begin{proof}
The coefficient of $z^n$ in $\Thetaop^{r-j}D_j(z)$ is $n^{r-j}\sigma_j(n)$. Apply Theorem~\ref{thm:powermoments}.
\end{proof}

\begin{corollary}[Exact and limiting power moments]\label{cor:powerlimit}
For $n,r\ge1$,
\[
\boxed{
\E[S_n^r]
=\sum_{m=1}^{n-1}\frac{A_r(m)}{2^m}
+\frac{A_r(n)}{2^{n-1}}.
}
\]
Consequently,
\[
\boxed{
\lim_{n\to\infty}\E[S_n^r]
=\sum_{m\ge1}\frac{A_r(m)}{2^m}.
}
\]
\end{corollary}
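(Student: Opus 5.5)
The exact formula follows by extracting coefficients from Theorem~\ref{thm:powermoments}, exactly as Corollary~\ref{cor:exactmeanS} did for $r=1$. Since $M_r(z)=T(z)\sum_{m\ge1}A_r(m)z^m$ and $T(z)=1+\sum_{j\ge1}2^{j-1}z^j$, the coefficient of $z^n$ is
\[
[z^n]M_r(z)=A_r(n)+\sum_{m=1}^{n-1}2^{n-m-1}A_r(m).
\]
The term $A_r(n)$ comes from the constant $1$ in $T$. By definition of $M_r$, this coefficient equals $\sum_{\alpha\in\C_n}S(\alpha)^r$. Dividing by $|\C_n|=2^{n-1}$ then gives $\E[S_n^r]$ in the stated form.

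For the limit, I need convergence of $\sum_{m}A_r(m)2^{-m}$ and that the boundary term $A_r(n)/2^{n-1}$ tends to $0$. Both follow from a polynomial bound on $A_r$. From the first form $A_r(m)=\sum_{d\mid m}(m^r-(m-d)^r)$, each summand lies in $[0,m^r]$, and there are at most $m$ divisors. Hence
\[
0\le A_r(m)\le m^{r+1}.
\]
The series therefore converges absolutely by comparison with $\sum m^{r+1}2^{-m}$, and $n^{r+1}2^{-(n-1)}\to0$. The partial sums $\sum_{m=1}^{n-1}A_r(m)2^{-m}$ then converge to the full series, which proves the limit.

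There is no real obstacle here; the only point needing care is the nonnegativity and polynomial bound for $A_r$. The alternating binomial form in $\sigma_j$ would make a bound less transparent, which is why I would use the divisor-difference form instead. Optionally, I would remark that the limit equals $\E[S^r]$ for the limiting law of Theorem~\ref{thm:Slaw}. Proving that would require uniform integrability or a direct computation, and it is not needed for the statement.
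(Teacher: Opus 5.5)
Your proposal is correct and follows the paper's own proof: coefficient extraction from Theorem~\ref{thm:powermoments} as in Corollary~\ref{cor:exactmeanS}, then a polynomial growth bound on $A_r(m)$ for the limit. Your explicit bound $0\le A_r(m)\le m^{r+1}$ from the divisor-difference form, and your check that the boundary term $A_r(n)/2^{n-1}$ tends to $0$, spell out details the paper leaves implicit.
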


\begin{proof}
The coefficient extraction is identical to Corollary~\ref{cor:exactmeanS}. The limiting series converges absolutely because, for fixed $r$, $A_r(m)$ grows at most polynomially.
\end{proof}

For reference,
\begin{align*}
\E S&=2.744033888759488\ldots,\\
\E S^2&=10.576850962011492\ldots,\\
\E S^3&=54.176877111337226\ldots,\\
\E S^4&=349.962961516483268\ldots.
\end{align*}
Thus
\[
\boxed{\operatorname{Var}(S)=3.047128979350972\ldots.}
\]

\section{Factorial moments and Stirling transforms}

We next record the factorial-moment forms of these results.  For the total size $S$, factorial moments are equivalent to the power moments by Stirling inversion.  For the run length $K$, they give a separate compact Lambert-type formula.

For $r\ge1$, write
\[
(x)_r=x(x-1)\cdots(x-r+1)
\]
for the falling factorial, and define
\[
F_r(z)=\sum_{\alpha\ne\varnothing}(S(\alpha))_r z^{|\alpha|}.
\]

\begin{proposition}[Factorial moments of the total size]\label{prop:Sfactorial}
For every $r\ge1$,
\[
\boxed{
F_r(z)=T(z)\sum_{n\ge1}C_r(n)z^n,
\qquad
C_r(n)=\sum_{d\mid n}\bigl((n)_r-(n-d)_r\bigr).
}
\]
Moreover, if $\StirlingII{r}{j}$ and $s(r,j)$ denote Stirling numbers of the second kind and signed Stirling numbers of the first kind, then
\[
\boxed{
M_r(z)=\sum_{j=1}^{r}\StirlingII{r}{j}F_j(z),
\qquad
F_r(z)=\sum_{j=1}^{r}s(r,j)M_j(z).
}
\]
The same identities hold after coefficient extraction and for the corresponding limiting moments.
\end{proposition}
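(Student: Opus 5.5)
The plan mirrors the proof of Theorem~\ref{thm:powermoments}, replacing the power $(pk)^r$ by the falling factorial $(pk)_r$. By Proposition~\ref{prop:decomp} and the admissible-tail lemma, a composition with initial run $k$ copies of $p$ contributes weight $(pk)_r$. This gives
\[
F_r(z)=\sum_{p\ge1}R_p(z)\sum_{k\ge1}(pk)_r z^{pk}
=T(z)\sum_{p\ge1}(1-z^p)\sum_{k\ge1}(pk)_r z^{pk}.
\]
(Equivalently, $\sum_k (pk)_r z^{pk}=z^r\frac{d^r}{dz^r}w_p(z)$, but this is not needed.) I then extract the coefficient of $z^n$ from the inner double sum. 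The term $\sum_k (pk)_r z^{pk}$ contributes $(n)_r$ exactly when $p\mid n$. The shifted term $z^p\sum_k (pk)_r z^{pk}$ contributes $(n-p)_r$ when $p\mid n$ and $n-p\ge p$. When $p=n$ the required $k=0$ is absent, but $(0)_r=0$ for $r\ge1$, so the formula $(n)_r-(n-d)_r$ summed over all $d\mid n$ is still correct. This yields $C_r(n)$.

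For the Stirling relations I use the classical pointwise identities $x^r=\sum_{j=1}^r\StirlingII{r}{j}(x)_j$ and $(x)_r=\sum_{j=1}^r s(r,j)x^j$. For $r\ge1$ the $j=0$ terms vanish, since $\StirlingII{r}{0}=s(r,0)=0$. I apply them with $x=S(\alpha)$, multiply by $z^{|\alpha|}$, and sum over nonempty $\alpha$. The resulting identities between $M_r$ and $F_j$ are finite linear combinations of power series, so they hold coefficientwise. Taking $[z^n]$ and dividing by $2^{n-1}$ gives the same relations between $\E[S_n^r]$ and $\E[(S_n)_j]$.

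For the limiting moments I let $n\to\infty$ in these finite linear relations. Each of $\E[(S_n)_j]$ and $\E[S_n^j]$ has a finite limit. For the power moments this is Corollary~\ref{cor:powerlimit}. For the factorial moments it follows by the same coefficient extraction with $A_r$ replaced by $C_r$, since $|C_r(m)|$ is polynomially bounded in $m$; alternatively it follows directly from the Stirling relation. The limits of finite linear combinations are the linear combinations of the limits.

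The only delicate point is the boundary bookkeeping for $p=n$ (and $n-p<p$) in the coefficient extraction. It is handled by noting that $(m)_r=0$ for $0\le m<r$, together with the vanishing of the $k=0$ term. Everything else is formal.
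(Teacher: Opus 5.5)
Your proposal is correct and follows essentially the paper's route. The paper reaches the same expression $T(z)\sum_{p\ge1}(1-z^p)\sum_{k\ge1}(pk)_r z^{pk}$ by differentiating the master generating function $r$ times in $v$ at $v=1$, where you weight the initial-run decomposition directly; it then applies the classical Stirling relations to $x=S(\alpha)$ exactly as you do, and your handling of the $p=n$ boundary term and of the limiting moments spells out what the paper leaves implicit.
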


\begin{proof}
Differentiate the master generating function $r$ times with respect to $v$ and set $x=u=v=1$.  This replaces the weight $(pk)^r$ by $(pk)_r$ and gives
\[
C_r(n)=\sum_{d\mid n}\bigl((n)_r-(n-d)_r\bigr).
\]
The two transform identities are the classical relations
\[
x^r=\sum_{j=0}^{r}\StirlingII{r}{j}(x)_j,
\qquad
(x)_r=\sum_{j=0}^{r}s(r,j)x^j,
\]
applied to $x=S(\alpha)$ and summed over all nonempty compositions.
\end{proof}

Factorial moments of $S$ therefore contain the same information as the power moments, so we do not list their low-order expansions.  For the run length $K$, the corresponding formula takes a simpler form.  Define
\[
G_r(z)=\sum_{\alpha\ne\varnothing}(K(\alpha))_r z^{|\alpha|}.
\]

\begin{proposition}[Factorial moments of the run length]\label{prop:Kfactorial}
For $r\ge1$,
\[
\boxed{
G_r(z)=r!\,T(z)\sum_{p\ge1}\frac{z^{pr}}{(1-z^p)^r}.
}
\]
If
\[
G_r(z)=T(z)\sum_{n\ge1}B_r(n)z^n,
\]
then
\[
\boxed{
B_r(n)=r!\sum_{d\mid n}\binom{d-1}{r-1}.
}
\]
Consequently,
\[
\boxed{
\lim_{n\to\infty}\E[(K_n)_r]
=r!\sum_{p\ge1}\frac{1}{(2^p-1)^r}.
}
\]
\end{proposition}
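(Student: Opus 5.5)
We would argue exactly as in the proof of Proposition~\ref{prop:Sfactorial}, but differentiating the master generating function of Theorem~\ref{thm:master} with respect to $u$ instead of $v$. Set $x=v=1$ and apply $\partial_u^r$ at $u=1$. Since $\partial_u^r u^{K}\big|_{u=1}=(K)_r$, this yields $G_r(z)$. On the right-hand side, for fixed $p$ we write $q=z^p$. The run factor is then $uq/(1-uq)=\sum_{k\ge1}u^kq^k$, and differentiating termwise gives
\[
\sum_{k\ge1}(k)_r q^k = r!\,\frac{q^r}{(1-q)^{r+1}}.
\]
This is the standard identity $\sum_k\binom{k}{r}q^k=q^r/(1-q)^{r+1}$. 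Multiplying by the admissible tail $(1-z^p)T(z)$ from the tail lemma cancels one power of $1-z^p$. Summing over $p$ gives the first boxed formula, $G_r(z)=r!\,T(z)\sum_{p}z^{pr}/(1-z^p)^r$.

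For the coefficient formula, we expand each summand with the negative binomial series:
\[
\frac{z^{pr}}{(1-z^p)^r}=\sum_{m\ge0}\binom{m+r-1}{r-1}z^{p(m+r)}=\sum_{j\ge r}\binom{j-1}{r-1}z^{pj}.
\]
The coefficient of $z^n$ in $\sum_p(\cdot)$ therefore collects the pairs $(p,j)$ with $pj=n$. We index these by $d=j=n/p$, which runs over the divisors of $n$; since $\binom{d-1}{r-1}=0$ for $d<r$, the lower limit $j\ge r$ imposes no restriction. This gives $B_r(n)=r!\sum_{d\mid n}\binom{d-1}{r-1}$. The main thing to check here is the bookkeeping: the binomial depends on the complementary divisor $n/p$, not on the part value $p$. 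Because $d\mapsto n/d$ permutes the divisors, the stated form is correct either way.

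For the limit, the coefficient extraction is identical to Corollaries~\ref{cor:exactmeanS} and~\ref{cor:powerlimit}. It gives
\[
\E[(K_n)_r]=\sum_{m=1}^{n-1}\frac{B_r(m)}{2^m}+\frac{B_r(n)}{2^{n-1}}.
\]
Since $B_r(m)\le r!\,\tau(m)\,m^{r-1}$ grows polynomially, this converges to $\sum_{m\ge1}B_r(m)2^{-m}$, which is the Lambert-type series $\sum_{m\ge1}B_r(m)z^m=r!\sum_p z^{pr}/(1-z^p)^r$ evaluated at $z=1/2$. The rearrangement is justified because all terms are nonnegative. Evaluating gives $r!\sum_p 2^{-pr}/(1-2^{-p})^r=r!\sum_p(2^p-1)^{-r}$.

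We do not expect a genuine obstacle. The only points needing care are the cancellation of one factor of $1-z^p$ against the tail and the convergence and rearrangement at $z=1/2$, both routine. As a consistency check, the case $r=1$ should recover $\E K=\sum_p 1/(2^p-1)$, in agreement with Corollary~\ref{cor:Klaw}.
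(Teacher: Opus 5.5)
Your proposal is correct and follows the paper's proof essentially step for step. You differentiate the master generating function $r$ times in $u$, use $\sum_{k\ge1}(k)_r q^k=r!\,q^r/(1-q)^{r+1}$ with $q=z^p$, cancel one factor of $1-z^p$ against the admissible tail, expand $z^{pr}/(1-z^p)^r=\sum_{d\ge r}\binom{d-1}{r-1}z^{pd}$ to read off $B_r(n)$, and get the limit by the same coefficient extraction as in Corollary~\ref{cor:powerlimit}, i.e.\ evaluation at $z=1/2$. Your bound $B_r(m)\le r!\,\tau(m)\,m^{r-1}$ just makes explicit the convergence step that the paper leaves implicit.
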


\begin{proof}
Differentiate Theorem~\ref{thm:master} $r$ times with respect to $u$ and set $x=v=u=1$. Since
\[
\sum_{k\ge1}(k)_r x^k=\frac{r!x^r}{(1-x)^{r+1}},
\]
the tail factor $1-z^p$ gives
\[
(1-z^p)\sum_{k\ge1}(k)_r z^{pk}
=\frac{r!z^{pr}}{(1-z^p)^r}.
\]
Expanding
\[
\frac{z^{pr}}{(1-z^p)^r}
=\sum_{d\ge r}\binom{d-1}{r-1}z^{pd}
\]
and collecting the coefficient of $z^n$ yields the formula for $B_r(n)$.  The limiting factorial moment follows by the same coefficient argument as in Corollary~\ref{cor:powerlimit}, equivalently by evaluating the analytic factor at $z=1/2$.
\end{proof}

For $r=1$ this reduces to
\[
B_1(n)=\sigma_0(n)
\quad\text{and}\quad
\lim_{n\to\infty}\E K_n
=\sum_{p\ge1}\frac{1}{2^p-1},
\]
recovering the Erd\H{o}s--Borwein constant from Corollary~\ref{cor:Klaw}.

\section{Extension to prescribed part sets}\label{sec:restricted}

The preceding decomposition is not specific to unrestricted compositions.  Let
$\mathcal A\subseteq\mathbb Z_{>0}$ be a nonempty set of allowed part sizes and write
\[
B_{\mathcal A}(z)=\sum_{a\in\mathcal A}z^a,
\qquad
T_{\mathcal A}(z)=\frac{1}{1-B_{\mathcal A}(z)}.
\]
Thus $T_{\mathcal A}$ is the ordinary generating function for compositions whose
parts lie in $\mathcal A$, including the empty composition.  We write
\[
t_{\mathcal A}(n)=[z^n]T_{\mathcal A}(z)
\]
with the convention $t_{\mathcal A}(n)=0$ for $n<0$.
For $j\geq0$ define the restricted divisor sums
\[
\sigma_{j,\mathcal A}(n)
   =\sum_{\substack{d\mid n\\ d\in\mathcal A}} d^j.
\]
When $\mathcal A=\mathbb Z_{>0}$ these reduce to the usual generalized divisor sums.

\begin{proposition}[Restricted-part first-run theory]\label{prop:restricted}
For compositions with all parts in $\mathcal A$, the following statements hold.

\begin{enumerate}[label=(\alph*)]
\item The joint generating function marking the first-part value $P$, initial-run
length $K$, and total initial-run size $S$ is
\[
\boxed{
\mathcal J_{\mathcal A}(z;x,u,v)
=
T_{\mathcal A}(z)
\sum_{p\in\mathcal A}
 x^p(1-z^p)\frac{u(vz)^p}{1-u(vz)^p}.}
\]

\item The number $N^{\mathcal A}_n(p,k)$ of such compositions of $n$ with
$(P,K)=(p,k)$ is
\[
\boxed{
N^{\mathcal A}_n(p,k)
=t_{\mathcal A}(n-pk)-t_{\mathcal A}(n-p(k+1)),
\qquad p\in\mathcal A.}
\]

\item If
\[
M_{r,\mathcal A}(z)
 =\sum_{\alpha}S(\alpha)^r z^{|\alpha|},
\qquad r\geq1,
\]
where the sum is over nonempty $\mathcal A$-restricted compositions, then
\[
\boxed{
M_{r,\mathcal A}(z)
=T_{\mathcal A}(z)
\sum_{n\geq1}A_{r,\mathcal A}(n)z^n,}
\]
where
\[
\boxed{
A_{r,\mathcal A}(n)
=
\sum_{\substack{d\mid n\\d\in\mathcal A}}
\bigl(n^r-(n-d)^r\bigr)
=
\sum_{j=1}^{r}(-1)^{j+1}\binom rj
 n^{r-j}\sigma_{j,\mathcal A}(n).}
\]
In particular,
\[
M_{1,\mathcal A}(z)
=T_{\mathcal A}(z)
\sum_{n\geq1}\sigma_{1,\mathcal A}(n)z^n.
\]
\end{enumerate}
\end{proposition}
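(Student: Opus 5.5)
The plan is to rerun the unrestricted argument with $T(z)$ replaced by $T_{\mathcal A}(z)$ and the sum over $p\ge1$ replaced by a sum over $p\in\mathcal A$. All three parts follow from one restricted analogue of the admissible-tail lemma, together with the canonical decomposition.

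\textbf{Restricted decomposition and tail lemma.} Proposition~\ref{prop:decomp} still applies: every nonempty $\mathcal A$-restricted composition is uniquely $(p,\dots,p)\cdot\beta$ with $k\ge1$ copies of $p$. Here $p=a_1\in\mathcal A$, and $\beta$ is an $\mathcal A$-restricted composition that is empty or begins with a part different from $p$. Conversely, every such triple with $p\in\mathcal A$ gives an $\mathcal A$-restricted composition.

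The tail lemma carries over verbatim. All restricted tails are counted by $T_{\mathcal A}(z)$. Those beginning with $p$ (allowed, since $p\in\mathcal A$) are counted by $z^pT_{\mathcal A}(z)$. So the admissible tails have generating function $R_{p,\mathcal A}(z)=(1-z^p)T_{\mathcal A}(z)$.

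\textbf{Parts (a) and (b).} For (a), follow the proof of Theorem~\ref{thm:master}. The run contributes $x^p\sum_{k\ge1}u^k(vz)^{pk}=x^pu(vz)^p/(1-u(vz)^p)$, which we multiply by $R_{p,\mathcal A}$ and sum over $p\in\mathcal A$.

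For (b), argue exactly as in Theorem~\ref{thm:finitepk}. After the $k$ copies of $p$, the remaining size $n-pk$ admits $t_{\mathcal A}(n-pk)$ tails. We subtract those beginning with a further $p$, of which there are $t_{\mathcal A}(n-p(k+1))$. The convention $t_{\mathcal A}(m)=0$ for $m<0$ handles small $n$, and $t_{\mathcal A}(0)=1$ handles the case of an empty tail.

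Alternatively, (b) follows from (a) by extracting $[x^pu^k]$ at $v=1$. This gives $[z^n]\,z^{pk}(1-z^p)T_{\mathcal A}(z)$, which is the same difference.

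\textbf{Part (c).} Apply $\Thetaop^r$ to $w_p$, or equivalently use $(v\partial_v)^r$ on (a) at $x=u=v=1$. This gives
\[
M_{r,\mathcal A}(z)=T_{\mathcal A}(z)\sum_{p\in\mathcal A}(1-z^p)\sum_{k\ge1}(pk)^rz^{pk}.
\]
Next extract $[z^n]$ of the inner double sum, as in Theorem~\ref{thm:powermoments}. For each $p\in\mathcal A$ with $p\mid n$, the term $k=n/p$ contributes $n^r$. The shifted term contributes $-(n-p)^r$, coming from $k=n/p-1$, and this is $0$ when $p=n$. Summing gives
\[
A_{r,\mathcal A}(n)=\sum_{d\mid n,\ d\in\mathcal A}\bigl(n^r-(n-d)^r\bigr).
\]
Expanding $n^r-(n-d)^r$ by the binomial theorem and summing over admissible divisors produces the $\sigma_{j,\mathcal A}$ form. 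The case $r=1$ gives $A_{1,\mathcal A}(n)=\sigma_{1,\mathcal A}(n)$.

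\textbf{Expected obstacle.} None of these steps is hard. The only point requiring care is that the tail subtraction must use the restricted count: the forbidden tails begin with $p$, and $p$ is itself an allowed part, so the identity $R_{p,\mathcal A}=(1-z^p)T_{\mathcal A}$ is exact. All manipulations are formal power series identities, so no convergence issues arise.
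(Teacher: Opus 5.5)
Your proposal is correct and follows the paper's proof essentially step for step. It uses the restricted tail identity $(1-z^p)T_{\mathcal A}(z)$, multiplies by the marked run series and sums over $p\in\mathcal A$ for (a), extracts coefficients for (b), and for (c) does the divisor-by-divisor extraction (with the vanishing $p=n$ boundary term) followed by the binomial theorem. Your explicit handling of $t_{\mathcal A}(0)=1$ and the negative-index convention, and the alternative derivation of (b) from (a), are harmless refinements of the same argument.
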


\begin{proof}
For fixed $p\in\mathcal A$, an initial run of copies of $p$ is followed either by
an empty tail or by a composition whose first part is different from $p$.  Since a
nonempty tail beginning with $p$ has generating function $z^pT_{\mathcal A}(z)$,
the admissible tail has generating function
\[
T_{\mathcal A}(z)-z^pT_{\mathcal A}(z)
=(1-z^p)T_{\mathcal A}(z).
\]
Multiplying by the marked initial-run series and summing over $p\in\mathcal A$
gives part (a).  Extracting the coefficient corresponding to a fixed run $p^k$
gives part (b).

For the $r$th power moment, the contribution of a run $p^k$ is $(pk)^r$.  Hence
\[
M_{r,\mathcal A}(z)
=T_{\mathcal A}(z)
\sum_{p\in\mathcal A}(1-z^p)
\sum_{k\geq1}(pk)^r z^{pk}.
\]
For a fixed exponent $n$, the first term contributes $n^r$ for each divisor
$p\mid n$ belonging to $\mathcal A$, while multiplication by $z^p$ contributes
$(n-p)^r$.  This yields the first formula for $A_{r,\mathcal A}(n)$, and the
binomial theorem gives the second.
\end{proof}

The exact finite stabilization of the unrestricted model is exceptional: it comes
from the exact geometric coefficient formula $2^{n-1}$.  For a general allowed
part set, the same local law survives asymptotically under the standard
supercritical sequence conditions.

\begin{proposition}[Root-governed limiting law]\label{prop:restrictedlimit}
Assume that $B_{\mathcal A}(z)$ is analytic in a disk of radius strictly larger
than some $\rho\in(0,1)$, that
\[
B_{\mathcal A}(\rho)=1,
\]
and that $\gcd(\mathcal A)=1$.  Then $\rho$ is the unique dominant zero of
$1-B_{\mathcal A}(z)$ and
\[
t_{\mathcal A}(n)
\sim
\frac{\rho^{-n-1}}{B'_{\mathcal A}(\rho)}.
\]
Consequently, for fixed $p\in\mathcal A$ and $k\geq1$,
\[
\boxed{
\Pp(P_n=p,K_n=k)
\longrightarrow
\rho^{pk}(1-\rho^p).}
\]
The limiting marginals satisfy
\[
\Pp(P=p)=\rho^p,
\qquad
\Pp(K\geq k)=B_{\mathcal A}(\rho^k),
\]
and the limiting total-size distribution is
\[
\boxed{
\Pp(S=s)
=
\rho^s
\sum_{\substack{d\mid s\\d\in\mathcal A}}
(1-\rho^d).}
\]
Moreover, for each fixed $r\geq1$,
\[
\boxed{
\lim_{n\to\infty}\E[S_n^r]
=
\sum_{m\geq1}A_{r,\mathcal A}(m)\rho^m,}
\]
whenever the displayed series is finite.
\end{proposition}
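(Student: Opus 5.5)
The plan is to run the standard supercritical-sequence analysis on $T_{\mathcal A}=1/(1-B_{\mathcal A})$. Then everything else is transferred from the exact finite formulas of Proposition~\ref{prop:restricted}.

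\textbf{Dominant root.} Since $B_{\mathcal A}$ has nonnegative coefficients and is increasing on $[0,\rho]$, $\rho$ is the unique positive root of $B_{\mathcal A}(z)=1$. For $|z|=\rho$ with $z\ne\rho$, the triangle inequality gives $|B_{\mathcal A}(z)|\le B_{\mathcal A}(\rho)=1$. Equality would force all $z^a$, $a\in\mathcal A$, to share one argument, hence $z^a=\rho^a$ for all $a$, which is impossible when $\gcd(\mathcal A)=1$. So $\rho$ is the only zero on $|z|=\rho$. It is simple because $B'_{\mathcal A}(\rho)>0$. By analyticity in a larger disk and compactness, there is $R>\rho$ such that $1-B_{\mathcal A}$ has no other zeros in $|z|\le R$. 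Subtracting the polar part $\frac{1}{B'_{\mathcal A}(\rho)(\rho-z)}$ leaves a function analytic in $|z|<R$. Cauchy estimates then give $t_{\mathcal A}(n)=\rho^{-n-1}/B'_{\mathcal A}(\rho)+O(R_1^{-n})$ for any $\rho<R_1<R$. This is the main technical step; the rest is bookkeeping.

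\textbf{Local and marginal laws.} The total count of nonempty restricted compositions of $n$ is $t_{\mathcal A}(n)$ for $n\ge1$. Dividing Proposition~\ref{prop:restricted}(b) by it and using the asymptotic for fixed $p,k$ gives
\[
t_{\mathcal A}(n-pk)/t_{\mathcal A}(n)\to\rho^{pk},
\]
so $\Pp(P_n=p,K_n=k)\to\rho^{pk}-\rho^{p(k+1)}$. Summing over $k$ gives $\Pp(P=p)=\rho^p$. Summing $\rho^{pj}(1-\rho^p)$ over $j\ge k$ and $p\in\mathcal A$ gives $\Pp(K\ge k)=\sum_{p\in\mathcal A}\rho^{pk}=B_{\mathcal A}(\rho^k)$. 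These limits form a probability law since $\sum_{p\in\mathcal A}\rho^p=1$. Hence pointwise convergence is convergence in distribution, and by Scheffé's lemma the marginals of the limit are limits of the marginals. The law of $S$ follows exactly as in Theorem~\ref{thm:Slaw}, by setting $p=d\in\mathcal A$ with $d\mid s$ and $k=s/d$. For each fixed $s$ the sum is finite, so $\Pp(S_n=s)\to\Pp(S=s)$ directly.

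\textbf{Moments.} From Proposition~\ref{prop:restricted}(c),
\[
\E[S_n^r]=\frac{1}{t_{\mathcal A}(n)}\sum_{m=1}^{n}A_{r,\mathcal A}(m)\,t_{\mathcal A}(n-m).
\]
Write $t_{\mathcal A}(j)\le C\rho^{-j}$ for all $j\ge0$, valid by the asymptotic. Each term divided by $t_{\mathcal A}(n)$ converges to $A_{r,\mathcal A}(m)\rho^m$, and is dominated by $C'A_{r,\mathcal A}(m)\rho^m$ uniformly in $n$. Since $A_{r,\mathcal A}(m)\ge0$ and the displayed series is assumed finite, dominated convergence for series gives the limit. (Under the analyticity hypothesis $A_{r,\mathcal A}(m)$ is polynomially bounded and $\rho<1$, so the series is in fact finite.)
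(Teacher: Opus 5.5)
Your proposal is correct and follows the same route as the paper. The supercritical analysis of $T_{\mathcal A}=1/(1-B_{\mathcal A})$ gives $t_{\mathcal A}(n-m)/t_{\mathcal A}(n)\to\rho^m$, which is applied to Proposition~\ref{prop:restricted}(b) and then summed; you also supply details the paper only cites, namely the triangle-inequality/gcd argument for the dominant zero and Scheff\'e's lemma to identify limits of marginals with marginals of the limit. The one variation is the moment step: you use dominated convergence on $\sum_{m\le n}A_{r,\mathcal A}(m)\,t_{\mathcal A}(n-m)/t_{\mathcal A}(n)$ (with $A_{r,\mathcal A}\ge 0$ and $t_{\mathcal A}(j)\le C\rho^{-j}$), whereas the paper notes that $H_{r,\mathcal A}$ is analytic at $\rho$, so $T_{\mathcal A}H_{r,\mathcal A}$ has the same simple pole scaled by $H_{r,\mathcal A}(\rho)$; both arguments are valid.
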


\begin{proof}
The stated hypotheses give the standard supercritical sequence schema: the
simple pole of $T_{\mathcal A}(z)=1/(1-B_{\mathcal A}(z))$ at $z=\rho$ is the
unique dominant singularity.  Since $B'_{\mathcal A}(\rho)>0$,
\[
T_{\mathcal A}(z)
\sim
\frac{1}{\rho B'_{\mathcal A}(\rho)}
\frac{1}{1-z/\rho},
\]
which gives the coefficient estimate and, for each fixed $m$,
\[
\frac{t_{\mathcal A}(n-m)}{t_{\mathcal A}(n)}\longrightarrow\rho^m.
\]
Apply this ratio to Proposition~\ref{prop:restricted}(b) to obtain the joint
limit.  Summation over $k$ gives
\[
\Pp(P=p)=\sum_{k\geq1}\rho^{pk}(1-\rho^p)=\rho^p,
\]
and these probabilities sum to one because $B_{\mathcal A}(\rho)=1$.
Summing over $p$ gives
\[
\Pp(K\geq k)=\sum_{p\in\mathcal A}\rho^{pk}
=B_{\mathcal A}(\rho^k).
\]
The formula for $S$ follows by setting $k=s/d$ and summing over admissible
divisors $d\mid s$.  Finally, let
\[
H_{r,\mathcal A}(z)=\sum_{m\geq1}A_{r,\mathcal A}(m)z^m.
\]
For each fixed $r$, Proposition~\ref{prop:restricted}(c) and the divisor
representation imply
\[
|A_{r,\mathcal A}(m)|
\leq \sum_{d\mid m}
\bigl(m^r-(m-d)^r\bigr)
=O\!\left(m^r\tau(m)\right).
\]
Hence $H_{r,\mathcal A}(z)$ has radius of convergence at least $1$ and is
analytic in a neighborhood of $\rho<1$.  The $r$th moment generating
function is $T_{\mathcal A}(z)H_{r,\mathcal A}(z)$, so the same simple-pole
coefficient ratio gives the limiting moment $H_{r,\mathcal A}(\rho)$,
which is the displayed series.
\end{proof}

\begin{example}[Odd parts]\label{ex:oddparts}
Let $\mathcal A=\{1,3,5,\ldots\}$.  Then
\[
B_{\mathcal A}(z)=\frac{z}{1-z^2},
\qquad
\rho=\frac{\sqrt5-1}{2}.
\]
The moment coefficients are expressed through sums over odd divisors,
\[
\sigma_{j,\mathrm{odd}}(n)
=\sum_{\substack{d\mid n\\d\text{ odd}}}d^j,
\]
and, for example,
\[
M_{1,\mathrm{odd}}(z)
=
\frac{1}{1-z/(1-z^2)}
\sum_{n\geq1}\sigma_{1,\mathrm{odd}}(n)z^n.
\]
Hence the same formulas involve divisor sums restricted to odd divisors.
\end{example}

\section{Numerical checks and initial coefficients}

The following table records direct brute-force sums over all compositions for small $n$. It provides an elementary check independent of the generating-function derivations.

\begin{center}
\begin{tabular}{rrrrrr}
\toprule
$n$ & $|\C_n|$ & $\sum S$ & $\sum K$ & $\E S_n$ & $\E K_n$\\
\midrule
1 & 1   & 1   & 1   & 1.000000 & 1.000000\\
2 & 2   & 4   & 3   & 2.000000 & 1.500000\\
3 & 4   & 9   & 6   & 2.250000 & 1.500000\\
4 & 8   & 21  & 13  & 2.625000 & 1.625000\\
5 & 16  & 41  & 25  & 2.562500 & 1.562500\\
6 & 32  & 88  & 52  & 2.750000 & 1.625000\\
7 & 64  & 172 & 102 & 2.687500 & 1.593750\\
8 & 128 & 351 & 206 & 2.742188 & 1.609375\\
\bottomrule
\end{tabular}
\end{center}

The first coefficient sequences for the power-moment generating functions are
\begin{align*}
[z^n]M_1(z)&:\quad 1,4,9,21,41,88,172,351,700,1405,\ldots,\\
[z^n]M_2(z)&:\quad 1,8,23,67,133,326,620,1333,2654,5395,\ldots,\\
[z^n]M_3(z)&:\quad 1,16,63,237,503,1468,2758,6471,12976,27127,\ldots,\\
[z^n]M_4(z)&:\quad 1,32,179,883,2089,7406,14096,37117,76094,166615,\ldots.
\end{align*}
A short verification program accompanying this manuscript enumerates all compositions up to a user-specified bound and checks Theorems~\ref{thm:finitepk}, \ref{thm:M1}, \ref{thm:powermoments}, and \ref{prop:Kfactorial} directly.

\section{Concluding remarks}

We considered three parameters of the initial run: its part value $P$, its length $K$, and its total size $S=PK$.  The joint generating function gives exact finite enumeration and the limiting distributions of these parameters.

For the total size, pointing gives the Lambert series for $\sigma_1$.  More generally, the $r$th power moment is described by a finite binomial combination of $\sigma_1,\ldots,\sigma_r$.  The run length has the limiting mean equal to the Erd\H{o}s--Borwein constant, and its higher factorial moments are
\[
r!\sum_{p\ge1}(2^p-1)^{-r}.
\]
The unrestricted model also has the finite-stabilization property derived in Section~4; this follows from the exact formula $|\C_n|=2^{n-1}$.

For a prescribed set of allowed parts $\mathcal A$, the same initial-run decomposition leads to restricted divisor sums and a limiting law determined by the dominant root of $B_{\mathcal A}(z)=1$.  Exact finite stabilization need not persist in that setting.  Another natural direction is to study later runs. The first run is a boundary case: for the second and subsequent runs one must also keep track of the preceding run value and impose inequality between adjacent run values, so the one-factor Lambert-series structure is replaced by coupled sums with diagonal corrections. We leave this later-run hierarchy for future work. It would also be natural to ask which parts of the argument survive under adjacency restrictions, such as Carlitz compositions, or under other local restrictions.

\section*{Data and code availability}
All formulas in the paper are exact. A compact verification script that enumerates integer compositions and checks the initial coefficients and moment identities is included as ancillary material with the arXiv submission.

\end{document}